\documentclass[12pt,reqno]{amsart}

\usepackage[a4paper,bindingoffset=0.1in,left=1.2in,right=1.3in,top=1.4in,bottom=1.5in,footskip=.25in]{geometry}

\usepackage{indentfirst,amssymb,amsmath,amsthm}    
\usepackage{newtxtext,newtxmath}
\usepackage{setspace}
\usepackage{times}
\usepackage[utf8]{inputenc}
\usepackage[T1]{fontenc}
\usepackage{verbatim}

\usepackage{hyperref}
\hypersetup{colorlinks=true, linkcolor=blue,citecolor=blue, urlcolor=blue}
\urlstyle{same}

\DeclareMathOperator{\pideg}{PI-deg}

\DeclareMathOperator{\cf}{Fract}

\DeclareMathOperator{\de}{det}
\DeclareMathOperator{\ke}{ker}
\DeclareMathOperator{\tors}{tor}

\DeclareMathOperator{\spa}{span}
\DeclareMathOperator{\mo}{mod}
\DeclareMathOperator{\mi}{min}

\DeclareMathOperator{\tr}{tr}

\numberwithin{equation}{section}

\newtheorem{theo}{Theorem}[section]
\newtheorem{theom}{Theorem}[subsection]

\newtheorem{lemm}{Lemma}[section]
\newtheorem{lemma}{Lemma}[subsection]
\newtheorem{rema}{Remark}[section]
\newtheorem{remak}{Remark}[subsection]
\newtheorem{coro}{Corollary}[section]
\newtheorem{corol}{Corollary}[subsection]
\newtheorem{prop}{Proposition}[section]
\newtheorem{propn}{Proposition}[subsection]

\newtheorem*{theorem A}{Theorem A}
\newtheorem*{theorem B}{Theorem B}

\begin{document}

\setcounter{page}{1} 
\baselineskip .65cm 
\pagenumbering{arabic}

\title[Quantized Matrix Algebras at a root of unity]{Dipper Donkin Quantized Matrix Algebra\\ and Reflection Equation Algebra at root of unity}
\author [Snehashis Mukherjee~And~Sanu Bera]{Snehashis Mukherjee$^1$ \and Sanu Bera$^2$}

\address {\newline Snehashis Mukherjee$^1$~~and Sanu Bera$^2$\newline School of Mathematical Sciences, \newline Ramakrishna Mission Vivekananda Educational and Research Institute (rkmveri), \newline Belur Math, Howrah, Box: 711202, West Bengal, India.
 }
\email{\href{mailto:tutunsnehashis@gmail.com}{tutunsnehashis@gmail.com$^1$};\href{mailto:sanubera6575@gmail.com}{sanubera6575@gmail.com$^2$}}

\subjclass[2020]{16D60, 16T20, 16S36, 16R20}
\keywords{simple modules, quantized matrix algebras, polynomial identity algebra.}
\begin{abstract}
In this article the quantized matrix algebras as in the title have been studied at a root of unity. A full classification of simple modules over such quantized matrix algebras of rank $2$ along with a class of finite dimensional indecomposable modules are given.
\end{abstract}
\maketitle

\section{\bf{Introduction}}\label{sec1}
Let $\mathbb{K}$ be a field and an element $q\in \mathbb{K}^*$. Many $\mathbb{K}$-algebras appearing in representation theory exhibit interesting $q$-deformations-families of algebras depending on a nonzero scalar parameter $q$, whose value at $q=1$ is the original algebra. Let us consider the matrix algebra $M_n(\mathbb{K})$. There are several ways to quantized the matrix algebra $M_n(\mathbb{K})$. Among them most common is probably the algebra introduced by Faddeev, Reshetikhin, and Takhtajan in \cite{frt}. Denote that algebra by $M_n(q)$. In \cite{dd}, Dipper and Donkin defined another quantized matrix algebra $\mathcal{M}\text{at}_n(q)$ which has many features which are different from $M_n(q)$ e.g., quantum determinant is not central. In \cite{mt}, a two parameter quantized matrix algebra was defined for which the above two quantized matrix algebras are special cases. 
\subsection{Dipper-Donkin Quantized Matrix Algebra} The Dipper-Donkin quantized matrix algebra $\mathcal{M}\text{at}_n(q)$ of rank $n$ is an associative algebra over $\mathbb{K}$  generated by elements $Z_{ij},~1\leq i,j\leq n$ subject to the following relations:
\[\begin{array}{l}
Z_{ij}Z_{st}=qZ_{st}Z_{ij}, \ \text{if}\  i>s,j\leq t,\\
Z_{ij}Z_{st}=Z_{st}Z_{ij}+(q-1)Z_{sj}Z_{it}, \ \text{if}\ i>s,j>t\\
Z_{ij}Z_{ik}=Z_{ik}Z_{ij}, \ \text{for~ all}\ i,j,k.
\end{array}\]
In fact the algebra $\mathcal{M}\text{at}_n(q)$ is a bialgebra and its structure maps are as follows:
\[\Delta(Z_{ij})=\sum\limits_{k}Z_{ik}\otimes Z_{kj},\ \epsilon(Z_{ij})=\delta_{ij},\ \ \forall\ 1\leq i,j\leq n\]
where $\delta_{ij}$ is the Kronecker delta, which equals $1\in\mathbb{K}$ when $i=j$ and otherwise equals $0\in\mathbb{K}$. In \cite{jo}, Jakobsen and Zhang computed explicitly the center and the PI degree of the algebra $\mathcal{M}\text{at}_n(q)$ if $q$ is a primitive $m$-th root of unity and characteristic of $\mathbb{K}$ is zero.
In this article we shall focus on the representations of the Dipper-Donkin quantized matrix algebra of rank $2$. 
\subsection{Reflection Equation Algebra} 
The ``reflection equation'' was first introduced by Cherendik in his study \cite{ch} of factorizable scattering on a half-line, and reflection equation algebras later emerged from Majid’s transmutation theory in \cite{mj}. The reflection equation algebra $A_q(M_2)$ of rank $2$ is an associative algebra over $\mathbb{K}$ generated by $u_{11},\ u_{12},\ u_{21},\ u_{22}$ together with the relations  
\[\begin{array}{c}
u_{11}u_{22}=u_{22}u_{11},\ u_{22}u_{12}=q^2u_{12}u_{22},\ u_{21}u_{22}=q^2u_{22}u_{21}\\ u_{11}u_{12}=u_{12}(u_{11}+(q^{-2}-1)u_{22})\\
u_{21}u_{11}=(u_{11}+(q^{-2}-1)u_{22})u_{21}\\
u_{21}u_{12}-u_{12}u_{21}=(q^{-2}-1)u_{22}(u_{22}-u_{11})
\end{array} \]
In \cite{ku}, Kulish and Sklyanin show that $A_q(M_2)$ has a $\mathbb{K}$-basis consisting of monomials in the generators $u_{ij}$. They compute the center of $A_q(M_2)$. Domokos and Lenagan address $A_q(M_n)$ for general $n$ in \cite{dl}. They show that $A_q(M_n)$ is a noetherian domain, and that it
has a $\mathbb{K}$-basis consisting of monomials in the generators $u_{ij}$. In \cite{eb}, the simple finite dimensional $A_q(M_2)$-modules were classified, finite dimensional weight modules were shown to be semisimple, Aut($ A_q(M_2)$) was computed, and the prime spectrum of $A_q(M_2)$ was computed along with its Zariski topology by E. Ebrahim. Finally, it was shown that $A_q(M_2)$ satisfies the Dixmier-Moeglin equivalence assuming $q$ is a not a root of unity.\\
\textbf{Aim:} In this article our aim is to classify simple modules over the quantized matrix algebras $\mathcal{M}\text{at}_2(q)$ and $ A_q(M_2)$ respectively assuming that $q$ is a primitive $m$-th root of unity and $\mathbb{K}$ is an algebraically closed field. \\
\textbf{Arrangement:} The paper is organized as follows: In Section $2$, we will recall some known facts for $\mathcal{M}\text{at}_n(q)$ and then discuss about the theory of Polynomial Identity algebras to comment about the $\mathbb{K}$-dimension of the simple modules over such quantized matrix algebras. Also an explicit expression of PI-degree for $\mathcal{M}\text{at}_n(q)$ is established with the help of derivation erasing process and PI-deg of quantum affine space. In Section $3$ we classify all simple $\mathcal{M}\text{at}_2(q)$-modules. In section $4$, we wish to construct some finite dimensional indecomposable modules over $\mathcal{M}\text{at}_2(q)$. In Section $5$, we focus on the algebra $A_q(M_2)$ at root of unity. In Section $5$ and $6$ we classify simple $A_q(M_2)$-modules and construct some indecomposable $A_q(M_2)$-modules respectively.
\section{\bf{Preliminaries}}
Till the end of the paper $\mathbb{K}$ is an algebraically closed field and all modules are right modules.
\subsection{Torsion and Torsion free modules:} 
Let $A$ be an algebra and $M$ be a right $A$-module and $S\subset A$ be a right Ore set. The submodule
\[\tors_{S}(M):=\{m\in M|ms=0\ \text{for some}\  s\in S\}\]
is called the $S$-torsion submodule of $M$. The module $M$ is said to be $S$-torsion if $\tors_{S}(M)=M$ and $S$-torsion free if $\tors_{S}(M)=0$. If the Ore set $S$ is generated by $x\in A$, we simply say that the $S$-torsion/torsionfree module $M$ is $x$-torsion/torsionfree.
\par A non zero element $x$ of an algebra ${A}$ is called a normal element if $x{A}={A}x$. Clearly if $x$ is a normal element of $A$, then the set $\{x^i~|~i\geq 0\}$ is an Ore set generated by $x$. The next lemma is obvious.
\begin{lemma}\label{itn}
Suppose that $A$ is an algebra, $x$ is a normal element of $A$ and $M$ is a simple $A$-module. Then either $Mx=0$ (if $M$ is $x$-torsion) or the map $x_{M}:M\rightarrow M, m\mapsto mx$ is an isomorphism (if $M$ is $x$-torsion free).
\end{lemma}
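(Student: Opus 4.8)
The plan is to exploit the normality of $x$ to show that both the kernel and the image of the additive map $x_M$ are $A$-submodules of $M$, and then to invoke simplicity of $M$ to force the stated dichotomy. I would stress at the outset that $x_M$ need \emph{not} be a right $A$-module homomorphism: since $(mx)a=m(xa)$ while $(x_M(m))a=m(xa)$ only agrees with $x_M(ma)=m(ax)$ when $ax=xa$, the map $x_M$ is only $\sigma$-semilinear for the automorphism $\sigma$ attached to the normal element $x$. Hence one cannot simply quote Schur's lemma; the substance of the argument is that $\ker x_M$ and $\operatorname{Im}x_M=Mx$ are nonetheless genuine submodules.

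First I would check that $Mx$ is a right submodule. For $m\in M$ and $a\in A$ we have $(mx)a=m(xa)$, and since $x$ is normal, $xa\in xA=Ax$, so $xa=bx$ for some $b\in A$; thus $(mx)a=(mb)x\in Mx$. Symmetrically, $\ker x_M=\{m\in M\mid mx=0\}$ is a submodule: if $mx=0$ then for any $a\in A$ we write $ax=xc$ with $c\in A$ (again using $Ax=xA$), whence $(ma)x=m(ax)=(mx)c=0$, so $ma\in\ker x_M$.

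Next, because $M$ is simple, each of $\ker x_M$ and $Mx$ equals $0$ or $M$. If $\ker x_M=M$, then $Mx=0$ and every element is annihilated by $x\in S$, so $M$ is $x$-torsion. If instead $\ker x_M=0$, then $x_M$ is injective; its image $Mx$ is then a nonzero submodule of $M$, hence equals $M$, so $x_M$ is also surjective and therefore a bijection of the underlying space. Iterating injectivity gives $mx^i\neq 0$ for all $i$ and all nonzero $m$, so $\tors_S(M)=0$ and $M$ is $x$-torsion free. These two mutually exclusive cases exhaust the possibilities and align exactly with the torsion/torsion-free alternative in the statement.

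The argument is entirely formal, so there is no real obstacle; the only point demanding care is keeping track of the direction of the normality relations for \emph{right} modules, using $xA\subseteq Ax$ to control the image $Mx$ and $Ax\subseteq xA$ to control the kernel, rather than conflating the two. One should also note that the asserted ``isomorphism'' is bijectivity of $x_M$ as a (semilinear) map, not an $A$-module endomorphism, which is the form in which the statement will be used when inverting $x$ later.
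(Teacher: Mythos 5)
Your proof is correct. The paper itself offers no argument for this lemma (it is dismissed with ``The next lemma is obvious''), and what you have written is precisely the standard argument the authors leave to the reader: normality of $x$ makes both $\ker x_M$ and $Mx$ into $A$-submodules, and simplicity then forces the dichotomy. Your care about the two directions of the relation $xA=Ax$ (using $xA\subseteq Ax$ for the image and $Ax\subseteq xA$ for the kernel), and your remark that $x_M$ is only a $\mathbb{K}$-linear (semilinear) bijection rather than an $A$-module endomorphism -- so Schur's lemma is not available -- are exactly the points that make the lemma slightly less than trivial, and they are handled correctly.
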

The above lemma says that the action of a normal element on a simple module is either trivial or invertible.
\subsection{Facts about $\mathcal{M}\text{at}_2(q)$:} First recall that $\mathcal{M}\text{at}_2(q)$ is an associative $\mathbb{K}$-algebra generated by $Z_{11},\ Z_{12},\ Z_{21},\ Z_{22}$ together with the relations  
\[\begin{array}{ll} 
Z_{12}Z_{11}=Z_{11}Z_{12},& Z_{21}Z_{11}=qZ_{11}Z_{21}\\
Z_{22}Z_{21}=Z_{21}Z_{22},&Z_{22}Z_{12}=qZ_{12}Z_{22}\\
Z_{21}Z_{12}=qZ_{12}Z_{21},&Z_{22}Z_{11}=Z_{11}Z_{22}+(q-1)Z_{12}Z_{21}.
\end{array}\]
The algebra $\mathcal{M}\text{at}_2(q)$ has an iterated skew polynomial presentation with respect to the ordering of variables $Z_{11},Z_{12},Z_{21},Z_{22}$ of the form: \[\mathbb{K}[Z_{11}][Z_{12},\rho_1][Z_{21},\rho_2][Z_{22},\sigma,\delta]\]
where the $\rho_1,\rho_2$ and $\sigma$ are $\mathbb{K}$-linear automorphisms and the $\delta$ is $\mathbb{K}$-linear $\sigma$-derivation such that
\[
\rho_1(Z_{11})=Z_{12},\rho_2(Z_{11})=qZ_{11},\rho_2(Z_{12})=qZ_{12},\]
\[\sigma(Z_{11})=Z_{11},~\sigma(Z_{12})=qZ_{12},~\sigma(Z_{21})=Z_{21},\]
\[\delta(Z_{11})=(q-1)Z_{12}Z_{21},~\delta(Z_{12})=\delta(Z_{21})=0.\]
This observation along with the skew polynomial version of the Hilbert Basis Theorem (cf. \cite[Theorem 2.9]{mcr}) yields the following proposition.
\begin{propn}\label{imp}
The algebra $\mathcal{M}\text{at}_2(q)$ is an affine noetherian domain. Moreover, the family $\{Z_{11}^aZ_{12}^bZ_{21}^cZ_{22}^d~|~a,b,c,d\geq 0\}$ is a $\mathbb{K}$-basis of $\mathcal{M}\text{at}_2(q)$.
\end{propn}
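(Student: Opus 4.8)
The plan is to read off the whole statement from the iterated skew polynomial presentation
\[\mathcal{M}\text{at}_2(q)=\mathbb{K}[Z_{11}][Z_{12},\rho_1][Z_{21},\rho_2][Z_{22},\sigma,\delta]\]
recorded just above, so that the proof reduces to the standard closure properties of Ore extensions applied along the tower $A_0=\mathbb{K}[Z_{11}]$, $A_1=A_0[Z_{12},\rho_1]$, $A_2=A_1[Z_{21},\rho_2]$, $A_3=A_2[Z_{22},\sigma,\delta]=\mathcal{M}\text{at}_2(q)$. Affineness needs no work: the algebra is by construction generated as a $\mathbb{K}$-algebra by the four elements $Z_{11},Z_{12},Z_{21},Z_{22}$.

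For the Noetherian and domain properties I would induct up this tower. The base ring $A_0$ is a commutative polynomial ring, hence a Noetherian domain. At each stage the twisting map ($\rho_1$, $\rho_2$, or $\sigma$) is a $\mathbb{K}$-algebra automorphism, so the skew polynomial version of the Hilbert Basis Theorem \cite[Theorem 2.9]{mcr} shows that $A_{i+1}$ is two-sided Noetherian whenever $A_i$ is. For the domain property I would use the leading-coefficient identity in an Ore extension $R[x;\sigma,\delta]$: when $R$ is a domain and $\sigma$ is injective, the top coefficient of a product $fg$ of nonzero elements of degrees $m,n$ is $a_m\sigma^m(b_n)\neq 0$, so $R[x;\sigma,\delta]$ is again a domain. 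Since every twisting map in the tower is an automorphism, hence injective, and $A_0$ is a domain, three steps give that $\mathcal{M}\text{at}_2(q)$ is a Noetherian domain.

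The basis claim follows from the defining feature of an Ore extension, namely that $R[x;\sigma,\delta]$ is a free left $R$-module on $\{x^k\}_{k\geq 0}$. Hence if $A_i$ has a $\mathbb{K}$-basis consisting of the ordered monomials in its first $i+1$ generators, then appending the powers of the new variable on the right yields such a basis for $A_{i+1}$; iterating from $A_0$ (with basis $\{Z_{11}^a\}$) up to $A_3$ produces exactly $\{Z_{11}^aZ_{12}^bZ_{21}^cZ_{22}^d\mid a,b,c,d\geq 0\}$. The one delicate point — which I would check first, and which is precisely the content already recorded before the statement — is the legitimacy of the presentation: that $\rho_1,\rho_2,\sigma$ genuinely extend to automorphisms of the successive subalgebras, that $\delta$ is a bona fide $\sigma$-derivation, and that the relations these Ore extensions impose reproduce exactly the six defining relations of $\mathcal{M}\text{at}_2(q)$. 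Granting that verification, the three closure properties above are entirely formal.
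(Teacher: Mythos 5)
Your proposal is correct and follows essentially the same route as the paper, which simply notes that the iterated skew polynomial presentation $\mathbb{K}[Z_{11}][Z_{12},\rho_1][Z_{21},\rho_2][Z_{22},\sigma,\delta]$ together with the skew polynomial version of the Hilbert Basis Theorem (\cite[Theorem 2.9]{mcr}) yields the proposition. You merely spell out the standard details the paper leaves implicit --- Noetherianness by induction up the tower, the domain property via leading coefficients, and the monomial basis from freeness of each Ore extension over its coefficient ring.
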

Let us consider an element $\de_{q}=Z_{11}Z_{22}-Z_{12}Z_{21}$ of the algebra $\mathcal{M}\text{at}_2(q)$ which is known as quantum determinant of $\mathcal{M}\text{at}_2(q)$ (cf. \cite{jo}). Then one can easily verify the following relations: 
\[\de_{q}=Z_{22}Z_{11}-qZ_{12}Z_{21},\]
\[\de_{q} \ Z_{11}=Z_{11}\ \de_q,\ \ \de_{q} \ Z_{12}=qZ_{12} \ \de_q,\]
\[\de_{q} \ Z_{21}=q^{-1} \ Z_{21}  \ \de_q,\ \ \de_{q} \ Z_{22}=Z_{22} \ \de_q.\]
So the element $\det_{q}$ is a normal element of the algebra $\mathcal{M}\text{at}_2(q)$. Also from the defining relations of the algebra $\mathcal{M}\text{at}_2(q)$, it follows that $Z_{12}$ and $Z_{21}$ are normal elements of $\mathcal{M}\text{at}_2(q)$.
\begin{lemma}
\emph{(\cite[Lemma 4.2]{jo})}\label{c1}
For $n\geq 1$, the following identities hold in the algebra $\mathcal{M}\text{at}_2(q)$:
\begin{enumerate}
    \item [(i)] $Z_{22}Z_{11}^n=Z_{11}^nZ_{22}+(1-q^{-n})Z_{21}Z_{12}Z_{11}^{n-1}$,
    \item [(ii)] $Z_{11}Z_{22}^n=Z_{22}^nZ_{11}+(1-q^{n})Z_{12}Z_{21}Z_{22}^{n-1}$.
\end{enumerate}
\end{lemma}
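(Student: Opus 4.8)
The plan is to prove both identities by induction on $n$, using nothing beyond the defining relations and the commutation rules they produce among the generators. I would carry out part (i) in full detail and then note that part (ii) follows by an entirely parallel argument, with the roles of $Z_{11}$ and $Z_{22}$ (and the sign of the exponent of $q$) interchanged, since the two formulas are mirror images of one another.

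For the base case $n=1$ of (i), I would start from the defining relation $Z_{22}Z_{11}=Z_{11}Z_{22}+(q-1)Z_{12}Z_{21}$ and rewrite the correction term using $Z_{21}Z_{12}=qZ_{12}Z_{21}$, that is $Z_{12}Z_{21}=q^{-1}Z_{21}Z_{12}$. This converts $(q-1)Z_{12}Z_{21}$ into $(1-q^{-1})Z_{21}Z_{12}$, which is exactly the claimed right-hand side for $n=1$. The analogous base case for (ii) is even quicker: moving the correction term of the same relation to the other side gives $Z_{11}Z_{22}=Z_{22}Z_{11}+(1-q)Z_{12}Z_{21}$ directly.

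For the inductive step of (i), assuming the identity for $n$, I would compute $Z_{22}Z_{11}^{n+1}=(Z_{22}Z_{11}^{n})Z_{11}$ by substituting the inductive hypothesis and then multiplying on the right by $Z_{11}$. The two key manipulations are: first, applying the base case to the inner factor $Z_{22}Z_{11}$ appearing in $Z_{11}^{n}Z_{22}Z_{11}$; and second, commuting $Z_{11}^{n}$ past $Z_{21}Z_{12}$. Here the relations $Z_{12}Z_{11}=Z_{11}Z_{12}$ and $Z_{21}Z_{11}=qZ_{11}Z_{21}$ yield $Z_{11}^{n}Z_{21}Z_{12}=q^{-n}Z_{21}Z_{12}Z_{11}^{n}$, introducing the factor $q^{-n}$. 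Collecting the two resulting copies of $Z_{21}Z_{12}Z_{11}^{n}$ leaves a coefficient $(1-q^{-1})q^{-n}+(1-q^{-n})$, and the crux of the argument is checking that this collapses to $1-q^{-(n+1)}$, matching the claim for $n+1$.

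Part (ii) runs identically, using $Z_{22}Z_{12}=qZ_{12}Z_{22}$ together with $Z_{22}Z_{21}=Z_{21}Z_{22}$ to get $Z_{22}^{n}Z_{12}Z_{21}=q^{n}Z_{12}Z_{21}Z_{22}^{n}$; the coefficient then combines as $(1-q)q^{n}+(1-q^{n})=1-q^{n+1}$. I do not expect any genuine obstacle in this proof. The only thing that requires care is the bookkeeping of $q$-powers when sliding $Z_{11}^{n}$ (respectively $Z_{22}^{n}$) through the normal elements $Z_{12}$ and $Z_{21}$, and the final coefficient cancellation; both are short and mechanical once the relations above are in hand.
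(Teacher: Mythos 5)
Your proposal is correct and follows exactly the route the paper indicates: the paper proves Lemma \ref{c1} "by induction on $n$" using the defining relations, which is precisely your argument, with the base case obtained from $Z_{22}Z_{11}=Z_{11}Z_{22}+(q-1)Z_{12}Z_{21}$ together with $Z_{21}Z_{12}=qZ_{12}Z_{21}$, and the inductive step resting on the commutation $Z_{11}^{n}Z_{21}Z_{12}=q^{-n}Z_{21}Z_{12}Z_{11}^{n}$ (respectively $Z_{22}^{n}Z_{12}Z_{21}=q^{n}Z_{12}Z_{21}Z_{22}^{n}$). Your coefficient bookkeeping, $(1-q^{-1})q^{-n}+(1-q^{-n})=1-q^{-(n+1)}$ and $(1-q)q^{n}+(1-q^{n})=1-q^{n+1}$, is also correct.
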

The equalities can be proved by induction on $n$. The defining relations of $\mathcal{M}\text{at}_2(q)$ along with this lemma we have:
\begin{corol}\label{dl1}
If $q$ is a primitive $m$-th root of unity, then $Z_{ij}^m$ is a central element for all $i,j=1,2$.
\end{corol}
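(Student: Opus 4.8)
The plan is to check centrality of each $Z_{ij}^m$ by verifying that it commutes with all four generators $Z_{11}, Z_{12}, Z_{21}, Z_{22}$; since these generate the algebra, commuting with each of them is equivalent to lying in the center. I would organize the defining relations into two groups according to whether they are of pure $q$-commuting type or involve the derivation term.

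The majority of the relations have the pure $q$-commuting form $ab = q\,ba$. For any such pair a one-line induction gives $a^m b = q^m\, b\, a^m$ and, equivalently, $a\,b^m = q^m\, b^m a$; since $q^m = 1$, the $m$-th power of either generator commutes with the other. Applying this to $Z_{21}Z_{11} = qZ_{11}Z_{21}$, to $Z_{22}Z_{12} = qZ_{12}Z_{22}$, and to $Z_{21}Z_{12} = qZ_{12}Z_{21}$ settles all the following cases: $Z_{11}^m$ with $Z_{21}$; $Z_{21}^m$ with $Z_{11}$; $Z_{12}^m$ with $Z_{21}$ and with $Z_{22}$; $Z_{21}^m$ with $Z_{12}$; and $Z_{22}^m$ with $Z_{12}$. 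The two genuinely commuting relations $Z_{11}Z_{12} = Z_{12}Z_{11}$ and $Z_{22}Z_{21} = Z_{21}Z_{22}$ are immediate, as is the commutation of each $Z_{ij}^m$ with $Z_{ij}$ itself.

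This leaves only the single relation that is not of $q$-commuting type, namely $Z_{22}Z_{11} = Z_{11}Z_{22} + (q-1)Z_{12}Z_{21}$, and this is exactly where Lemma~\ref{c1} enters. To commute $Z_{11}^m$ past $Z_{22}$ I would invoke part (i) with $n = m$, which gives $Z_{22}Z_{11}^m = Z_{11}^m Z_{22} + (1-q^{-m})Z_{21}Z_{12}Z_{11}^{m-1}$; since $q^m = 1$ forces $q^{-m} = 1$, the correction term vanishes. Symmetrically, part (ii) with $n = m$ handles $Z_{22}^m$ against $Z_{11}$, the coefficient $(1-q^m)$ again being zero.

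Assembling these cases shows that each $Z_{ij}^m$ commutes with all four generators and is therefore central. The whole argument is routine apart from the one non-$q$-commuting pair $(Z_{11}, Z_{22})$, which is the only real obstacle; the role of Lemma~\ref{c1} is precisely to package the iterated derivation contributions into the single factor $(1 - q^{\pm m})$, and it is this factor that forces the obstruction to disappear exactly when $q$ is a primitive $m$-th root of unity.
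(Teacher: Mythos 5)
Your proof is correct and follows exactly the paper's intended argument: the paper derives this corollary from the defining $q$-commutation relations (handling all pairs except $(Z_{11},Z_{22})$) together with Lemma~\ref{c1} applied at $n=m$, where $q^{\pm m}=1$ kills the correction terms. Nothing is missing; your case analysis is complete.
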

All the required facts of the reflection equation algebra of rank $2$ have been mentioned in section \ref{reas}.
\subsection{Polynomial Identity Algebras}
In this subsection, we recall some known facts concerning Polynomial Identity algebra that we shall be applying on $\mathcal{M}\text{at}_2(q)$ and $A_q(M_2)$ at the root of unity for further development. First we recall a result which provides a sufficient condition for a ring to be PI. 

\begin{propn}\emph{(\cite[Corollary 13.1.13]{mcr})}\label{f}
If $R$ is a ring which is a finitely generated module over a commutative subring, then $R$ is a PI ring.
\end{propn}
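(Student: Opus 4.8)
The plan is to realize $R$ inside an endomorphism ring of a module over $C$ and then transfer the polynomial identities of a matrix ring over a commutative base. Write $R=\sum_{i=1}^{n}x_iC$ as a finitely generated right $C$-module, and consider the left regular representation $\lambda\colon R\to\enmor_{C}(R_{C})$ given by $\lambda(r)(m)=rm$, where $R_{C}$ denotes $R$ viewed as a right $C$-module. The first observation is that each $\lambda(r)$ is genuinely a right $C$-module endomorphism: for $m\in R$ and $c\in C$ one has $\lambda(r)(mc)=r(mc)=(rm)c=\lambda(r)(m)\,c$ by associativity alone, so no centrality of $C$ is required. Moreover $\lambda$ is a ring homomorphism, and it is injective since $\lambda(r)=0$ forces $r=r\cdot 1=0$. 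Thus $R$ embeds as a subring of $\enmor_{C}(R_{C})$, and it suffices to prove that the latter ring is PI.

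To control $\enmor_{C}(R_{C})$ I would compare it with a matrix ring. The $n$ generators give a surjection $\pi\colon C^{n}\twoheadrightarrow R_{C}$ of right $C$-modules with kernel $K$. Since $C^{n}$ is free, hence projective, every $g\in\enmor_{C}(R_{C})$ lifts to some $\tilde g\in\enmor_{C}(C^{n})=M_{n}(C)$ with $\pi\tilde g=g\pi$; applying this relation to $k\in K$ shows $\tilde g(K)\subseteq K$. Consequently, setting $S=\{f\in M_{n}(C)\mid f(K)\subseteq K\}$ and $I=\{f\in S\mid f(C^{n})\subseteq K\}$, the assignment $f\mapsto\bar f$ (the map induced on $C^{n}/K\cong R_{C}$) defines a surjective ring homomorphism $S\to\enmor_{C}(R_{C})$ with kernel $I$, so that $\enmor_{C}(R_{C})\cong S/I$.

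The argument is then completed by invoking the Amitsur--Levitzki theorem: over the commutative ring $C$ the full matrix ring $M_{n}(C)$ satisfies the standard polynomial identity of degree $2n$. The subring $S\subseteq M_{n}(C)$ inherits this identity, and since polynomial identities descend to quotients, $S/I\cong\enmor_{C}(R_{C})$ satisfies it as well. Therefore its subring $R$ is a PI ring.

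I expect the only real obstacle to be the possible non-freeness of $R$ as a $C$-module: if $R_{C}$ were free of rank $n$ one would have $\enmor_{C}(R_{C})\cong M_{n}(C)$ outright, but in general $R_{C}$ is merely a finitely generated, often non-projective, module. The device of writing $\enmor_{C}(R_{C})$ as the subquotient $S/I$ of $M_{n}(C)$, justified by the projectivity of the free cover $C^{n}$, is precisely what circumvents this difficulty; everything else is formal, and one may replace the explicit standard identity by any fixed identity of $M_{n}(C)$ if only PI-ness without a specific degree is desired.
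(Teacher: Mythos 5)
Your proposal is correct, and it coincides with the standard argument: the paper itself gives no proof of this proposition (it is quoted directly from McConnell--Robson, Corollary 13.1.13), and your route --- embedding $R$ into $\enmor_C(R_C)$ by left multiplication, realizing that endomorphism ring as a subquotient $S/I$ of $M_n(C)$ via the projective cover $C^n\twoheadrightarrow R_C$, and invoking the Amitsur--Levitzki identity $s_{2n}$ for $M_n(C)$, which passes to subrings and quotients --- is essentially the proof in that reference. In particular you correctly isolate the two delicate points: that no centrality of $C$ in $R$ is needed (associativity alone makes $\lambda(r)$ right $C$-linear), and that non-projectivity of $R_C$ is what forces the subquotient detour rather than a direct embedding into $M_n(C)$.
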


\begin{propn} \label{finite}
The algebra $\mathcal{M}\text{at}_2(q)$ is a PI algebra if and only if $q$ is a root of unity.
\end{propn}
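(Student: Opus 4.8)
The plan is to prove the two implications separately; the forward direction (root of unity $\Rightarrow$ PI) is short and leans directly on the results already recorded, while the converse requires a structural argument and is where the real work lies.

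First I would treat the easy direction. Suppose $q$ is a primitive $m$-th root of unity. By Corollary \ref{dl1} each $Z_{ij}^m$ is central, so the subalgebra $C := \mathbb{K}[Z_{11}^m, Z_{12}^m, Z_{21}^m, Z_{22}^m]$ sits inside the centre and is in particular commutative. Using the PBW-type $\mathbb{K}$-basis $\{Z_{11}^aZ_{12}^bZ_{21}^cZ_{22}^d\}$ from Proposition \ref{imp} together with the Euclidean division of each exponent by $m$ (writing, say, $Z_{11}^a=(Z_{11}^m)^{\lfloor a/m\rfloor}Z_{11}^{a\bmod m}$ and pulling the central factors out), one sees that $\mathcal{M}\text{at}_2(q)$ is generated as a $C$-module by the finite set $\{Z_{11}^aZ_{12}^bZ_{21}^cZ_{22}^d : 0\le a,b,c,d\le m-1\}$. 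Thus $\mathcal{M}\text{at}_2(q)$ is a finitely generated module over a commutative subring, and Proposition \ref{f} gives at once that it is PI.

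For the converse I would argue by contraposition: assuming $q$ is not a root of unity, I show $\mathcal{M}\text{at}_2(q)$ is not PI. The defining relation $Z_{21}Z_{12}=qZ_{12}Z_{21}$ shows that the subalgebra $B$ generated by $Z_{12}$ and $Z_{21}$ is a quantum plane, and by Proposition \ref{imp} the monomials $Z_{12}^bZ_{21}^c$ (the case $a=d=0$ of the basis) are $\mathbb{K}$-linearly independent, so $B$ is a genuine quantum plane, infinite dimensional over $\mathbb{K}$. Since every subalgebra of a PI algebra is again PI, it suffices to prove that $B$ is not PI. To this end I would first compute $Z(B)$: expanding a central element in the basis $\{Z_{12}^bZ_{21}^c\}$ and imposing commutation with the two generators, the identities $Z_{21}Z_{12}^b=q^bZ_{12}^bZ_{21}$ and $Z_{12}^bZ_{21}^cZ_{12}=q^cZ_{12}^{b+1}Z_{21}^c$ force $q^b=q^c=1$ on every monomial that occurs; as $q$ is not a root of unity only scalars survive, so $Z(B)=\mathbb{K}$. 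Now $B$ is a noetherian domain, hence a prime PI ring were it PI, and Posner's theorem would force its Goldie quotient ring to be finite dimensional over $\cf(Z(B))=\mathbb{K}$, contradicting $\dim_{\mathbb{K}}B=\infty$. (Equivalently, localising $B$ at the powers of the normal elements $Z_{12},Z_{21}$ yields the quantum torus $\mathbb{K}_q[Z_{12}^{\pm1},Z_{21}^{\pm1}]$, which for $q$ not a root of unity is a simple $\mathbb{K}$-algebra of infinite dimension with centre $\mathbb{K}$, hence not PI by Kaplansky's theorem, and localisations of PI rings are PI.) Either way $B$, and therefore $\mathcal{M}\text{at}_2(q)$, fails to be PI.

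The main obstacle is entirely in this converse: the delicate point is establishing that the quantum plane $B$ genuinely violates the PI condition. I expect the cleanest route to be the centre computation combined with Posner's theorem (or, symmetrically, the quantum-torus/Kaplansky argument), since these avoid any direct and error-prone manipulation of an alleged multilinear identity. The first direction, by contrast, is essentially immediate once Corollary \ref{dl1} and Proposition \ref{f} are in hand.
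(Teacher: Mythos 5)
Your proof is correct, and its overall skeleton matches the paper's: the forward direction is exactly the paper's argument (Corollary \ref{dl1} makes the subalgebra generated by the $Z_{ij}^m$ central, the basis of Proposition \ref{imp} gives finite module generation over it, and Proposition \ref{f} concludes), and the converse is, as in the paper, reduced to the failure of the PI property for a quantum plane inside $\mathcal{M}\text{at}_2(q)$. The divergence is in how that failure is established. The paper takes the quantum plane on $Z_{11},Z_{21}$ and simply cites \cite[Proposition I.14.2]{brg}; you take the equally valid quantum plane $B$ on $Z_{12},Z_{21}$ (linear independence of the monomials $Z_{12}^bZ_{21}^c$ does follow from Proposition \ref{imp}, as you note) and prove the fact from scratch: $Z(B)=\mathbb{K}$ when $q$ is not a root of unity, so Posner's theorem would make the Goldie quotient ring of $B$, hence $B$ itself, finite dimensional over $\mathbb{K}$ --- a contradiction. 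This is essentially the standard proof lying behind the paper's citation, so the mathematics agrees; the citation buys the paper brevity, while your version buys self-containedness at the price of invoking Posner (or Kaplansky). One small caution on your parenthetical alternative: the blanket statement ``localisations of PI rings are PI'' should not be waved at a general Ore localisation without justification; in your setting it is harmless, since the quantum torus embeds in the classical quotient ring of the domain $B$, and that quotient ring is PI by Posner once $B$ is assumed PI, after which Kaplansky applied to the simple, infinite-dimensional, centre-$\mathbb{K}$ quantum torus gives the contradiction. With that phrasing tightened, both of your routes are complete.
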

\begin{proof}
Suppose $q$ be a primitive $m$-th root of unity. Let $Z$ be the subalgebra of $\mathcal{M}\text{at}_2(q)$ generated by $Z_{ij}^m$ for all $i,j=1,2$. Then by Corollary \ref{dl1}, the subalgebra $Z$ is central. Now from the $\mathbb{K}$-basis of $\mathcal{M}\text{at}_2(q)$, one can easily verify  that $\mathcal{M}\text{at}_2(q)$ is a finitely generated module over $Z$, with basis 
\[\{Z_{11}^aZ_{12}^bZ_{21}^cZ_{22}^d~|~0\leq a,b,c,d\leq m-1\}.\]
Hence it follows from Proposition \ref{f} that $\mathcal{M}\text{at}_2(q)$ is PI algebra.
\par For the converse, just note that the $\mathbb{K}$-subalgebra of $\mathcal{M}\text{at}_2(q)$ generated by $Z_{11}$ and $Z_{21}$ with relation $Z_{21}Z_{11}=qZ_{11}Z_{21}$ is not PI if $q$ is not a root of unity (cf. \cite[Proposition I.14.2.]{brg}).
\end{proof}
\begin{remak}\label{re1}
Now the algebra $\mathcal{M}\text{at}_2(q)$ being a finite module over a central subalgebra, it follows from a standard result (cf. \cite[Proposition III.1.1.]{brg}) that every simple $\mathcal{M}\text{at}_2(q)$-module is finite dimensional vector space over $\mathbb{K}$.
\end{remak}
\par Primitive PI rings exhibit a particularly nice structure established in Kaplansky's Theorem (cf. \cite[Theorem 13.3.8]{mcr}). Now Kaplansky's Theorem has a striking consequence in case of a prime affine PI algebra over an algebraically closed field.
\begin{propn}\emph{(\cite[Theorem I.13.5]{brg})}\label{sim}
Let $A$ be a prime affine PI algebra over an algebraically closed field $\mathbb{K}$, with PI-deg($A$) = $n$ and $V$ be a simple $A$-module. Then $V$ is a vector space over $\mathbb{K}$ of dimension $t$, where $t \leq n$, and $A/ann_A(V) \cong M_t(\mathbb{K})$.
\end{propn}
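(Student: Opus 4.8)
The plan is to obtain this as a consequence of the structure theory of primitive PI rings (Kaplansky's theorem), together with the Nullstellensatz for affine PI algebras and the Amitsur--Levitzki theorem. Write $P=\ann_A(V)$. Since $V$ is simple, $P$ is a primitive ideal, so $A/P$ is a primitive ring; being a homomorphic image of the PI algebra $A$, it is again PI. Kaplansky's theorem then applies: a primitive PI ring is simple Artinian and finite-dimensional over its center, so $A/P\cong M_t(D)$ for some division ring $D$ finite-dimensional over its center $Z(D)$, with $V$ realized as the (unique up to isomorphism) simple $M_t(D)$-module, namely the space of column vectors $D^t$.

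Next I would identify $D$ with $\mathbb{K}$. By Schur's lemma $\enmor_A(V)\cong D^{\mathrm{op}}$, and here the hypothesis that $A$ is affine over $\mathbb{K}$ enters decisively: the Nullstellensatz for affine PI algebras (which follows from the Artin--Tate lemma) guarantees that the endomorphism division ring of a simple module is algebraic over $\mathbb{K}$. Thus $D$ is algebraic over $\mathbb{K}$, so its center $Z(D)$ is an algebraic field extension of $\mathbb{K}$ and hence equals $\mathbb{K}$ because $\mathbb{K}$ is algebraically closed. Since a finite-dimensional division algebra over an algebraically closed field is trivial, $D=\mathbb{K}$. Therefore $A/P\cong M_t(\mathbb{K})$ and $\dime_{\mathbb{K}}V=t$.

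It remains to prove $t\le n$. By definition of the PI-degree (via Posner's theorem, the Goldie quotient ring of the prime PI algebra $A$ is central simple of degree $n=\pideg(A)$ over its center), $A$ satisfies every polynomial identity satisfied by $M_n(\mathbb{K})$; in particular, by the Amitsur--Levitzki theorem, $A$ satisfies the standard identity $s_{2n}$. This identity descends to the homomorphic image $M_t(\mathbb{K})\cong A/P$. But Amitsur--Levitzki also asserts that $M_t(\mathbb{K})$ satisfies no standard identity of degree less than $2t$, so $2n\ge 2t$, that is, $t\le n$.

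The main obstacle is the middle step, namely forcing the center of $A/P$ down to $\mathbb{K}$. This is exactly where affineness is indispensable and where the PI Nullstellensatz carries the weight: absent it, Kaplansky's theorem only yields $A/P\cong M_t(D)$ with $D$ finite-dimensional over some a priori arbitrary extension field of $\mathbb{K}$, and then neither the equality $\dime_{\mathbb{K}}V=t$ nor the isomorphism $A/P\cong M_t(\mathbb{K})$ would follow. The remaining steps are routine applications of Kaplansky's and Amitsur--Levitzki's theorems.
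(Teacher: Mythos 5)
Your proof is correct, and it matches the approach the paper points to: the paper gives no proof of Proposition~\ref{sim}, citing it as \cite[Theorem I.13.5]{brg} and presenting it as the ``striking consequence'' of Kaplansky's theorem for prime affine PI algebras over an algebraically closed field. Your argument --- Kaplansky's theorem applied to the primitive PI quotient $A/\ann_A(V)$, the PI Nullstellensatz (via Artin--Tate) to force the endomorphism division ring down to $\mathbb{K}$, and Amitsur--Levitzki to get $t\leq n$ --- is precisely the standard proof underlying that citation, so there is nothing to correct.
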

The above proposition yields the important link between the PI degree of a prime affine PI algebra over an algebraically closed field and its irreducible representations. Thus from Proposition $\ref{finite}$ and Proposition $\ref{imp}$ along with Proposition $\ref{sim}$, it is quite clear that each simple $\mathcal{M}\text{at}_2(q)$-module is finite dimensional and can have dimension at most $\pideg(\mathcal{M}\text{at}_2(q))$. Infact this bound is attained by some simple $\mathcal{M}\text{at}_2(q)$-modules.
\par So far all the results as mentioned above for the algebra $\mathcal{M}\text{at}_2(q)$ is also true for $\mathcal{M}\text{at}_n(q)$ in general (cf. \cite{dd,jo}).

\subsection{PI Degree of $\mathcal{M}\text{at}_n(q)$}\label{pisub} 
Let $\mathbf{q}=(q_{ij})$ be a multiplicatively antisymmetric $(n\times n)$-matrix over $\mathbb{K}$, that is, $q_{ii}=1$ for all $i$ and $q_{ji}={q_{ij}}^{-1}$ for all $i\neq j$. Given such a matrix, the multiparameter quantum affine space of rank $n$ is the $\mathbb{K}$-algebra $\mathcal{O}_{\mathbf{q}}(\mathbb{K}^n)$ generated by the variables $x_1,\cdots ,x_n$ subject only to the relations
\begin{equation} \label{relation}
x_ix_j=q_{ij}x_jx_i, \ \ \ \forall\ \ \ 1 \leq i,j\leq n.
\end{equation}
It is of interest to know when a quantum affine space $\mathcal{O}_{\mathbf{q}}(\mathbb{K}^n)$ is a PI ring and what 
its PI degree is. Using Kaplansky's Theorem and Proposition \ref{f}, one can prove that $\mathcal{O}_{\mathbf{q}}(\mathbb{K}^n)$ is a PI ring if and only if all the $q_{ij}$ are roots of unity. The following result of De Concini and Procesi provides one of the key techniques for calculating the PI degree of a quantum affine space over arbitrary characteristic of $\mathbb{K}$.
\begin{propn}\emph{(\cite[Proposition 7.1]{dicon})}\label{quan}
Suppose that $q_{ij}=q^{h_{ij}}$ for all $i,j$, where $q \in \mathbb{K}^*$ is a primitive $m$-th root of unity and the $h_{ij} \in \mathbb{Z}$. Let $h$ be the cardinality of the image of the homomorphism 
\[
    \mathbb{Z}^n \xrightarrow{(h_{ij})} \mathbb{Z}^n \xrightarrow{\pi} \left(\mathbb{Z}/m\mathbb{Z}\right)^n,
\]
where $\pi$ denotes the canonical epimorphism. Then $\pideg(\mathcal{O}_{\mathbf{q}}(\mathbb{K}^n))=\sqrt{h}$.
\end{propn}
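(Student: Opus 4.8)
The statement is the De Concini--Procesi PI-degree formula for a multiparameter quantum affine space, and the plan is to reduce the defining matrix $H=(h_{ij})$ to a simple block form by an integral congruence, realize that congruence by a monomial change of variables, and then compute both the PI degree and the number $h$ for the block form and observe they match. First I would make two harmless reductions. Since the relations \eqref{relation} and the homomorphism in the statement depend only on the residues $h_{ij}\bmod m$, I may replace $H$ by the genuinely skew-symmetric integer matrix with entries $h_{ij}$ for $i<j$, zeros on the diagonal, and $-h_{ij}$ for $i>j$; the hypotheses $q_{ii}=1$ and $q_{ji}=q_{ij}^{-1}$ guarantee this representative is congruent to $H$ modulo $m$, so we may assume $H=-H^{\top}$ over $\mathbb{Z}$.

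Next I would pass to the quantum torus. As an iterated Ore extension $\mathcal{O}_{\mathbf{q}}(\mathbb{K}^n)$ is a Noetherian domain, so its PI degree equals that of its Goldie division ring of fractions, hence that of the quantum torus $T$ obtained by inverting all the $x_i$. On $T$, each $P=(p_{ij})\in GL_n(\mathbb{Z})$ gives, via $y_k=\prod_j x_j^{p_{jk}}$, an algebra automorphism, and from the bicharacter identity $x^{a}x^{b}=q^{\,a^{\top}Hb}\,x^{b}x^{a}$ one checks that $y_ky_l=q^{(P^{\top}HP)_{kl}}\,y_ly_k$; thus $T$ for $H$ is isomorphic to $T$ for $P^{\top}HP$, so the PI degree is an invariant of the integral congruence class of $H$. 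The same is true of the integer $h$: because $P$ and $P^{\top}$ are invertible modulo $m$ they act bijectively on $(\mathbb{Z}/m\mathbb{Z})^n$, so the cardinality of the image of $\pi\circ(P^{\top}HP)$ equals that of $\pi\circ H$. Both quantities to be compared therefore depend only on the congruence class of $H$.

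By the structure theorem for alternating forms over the PID $\mathbb{Z}$, a suitable $P\in GL_n(\mathbb{Z})$ brings $H$ to block-diagonal form with $2\times 2$ blocks $\begin{pmatrix}0 & d_i\\ -d_i & 0\end{pmatrix}$ for $1\le i\le s$ and a zero block of size $n-2s$. Writing $m_i=m/\gcdi(m,d_i)$, the scalar $q^{d_i}$ is a primitive $m_i$-th root of unity. For this normal form the torus $T$ is a tensor product of rank-two quantum tori $T_i$ with relation $y_iz_i=q^{d_i}z_iy_i$ and a commutative Laurent factor; each $T_i$ is free over its center $\mathbb{K}[y_i^{\pm m_i},z_i^{\pm m_i}]$ of rank $m_i^2$ and is Azumaya (a symbol-type algebra, exhibiting an $m_i$-dimensional simple module by clock-and-shift operators), so $T$ is Azumaya over its center of constant rank $\prod_i m_i^2$. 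Since an Azumaya algebra of rank $t^2$ over its center has PI degree $t$, we get $\pideg(T)=\prod_i m_i$.

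Finally I would compute $h$ for the same normal form. The image of $\mathbb{Z}^n\xrightarrow{H}\mathbb{Z}^n\xrightarrow{\pi}(\mathbb{Z}/m\mathbb{Z})^n$ splits as a direct sum over the blocks; each $2\times 2$ block contributes the subgroup $\bigl(\gcdi(m,d_i)\mathbb{Z}/m\mathbb{Z}\bigr)^2$ of order $m_i^2$, while the zero block contributes $1$. Hence $h=\prod_i m_i^2$, giving $\sqrt{h}=\prod_i m_i=\pideg(T)=\pideg(\mathcal{O}_{\mathbf{q}}(\mathbb{K}^n))$, as claimed. The main obstacle is the congruence reduction to normal form together with the verification that the monomial substitution is a genuine isomorphism of quantum tori preserving PI degree; once that is in place, the remaining ingredients---the PI degree of a rank-two quantum torus and the invariance of $h$ under congruence---are routine.
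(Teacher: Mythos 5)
Your proof is correct, but note that the paper itself offers no proof of this statement: Proposition \ref{quan} is quoted from De Concini--Procesi \cite[Proposition 7.1]{dicon} and used as a black box, so the only meaningful comparison is with the cited source rather than with an internal argument. Your route is essentially the standard one: replace $(h_{ij})$ by a genuinely skew-symmetric integer representative, pass to the quantum torus (which has the same PI degree), check that both $\pideg$ and $h$ are invariants of the integral congruence class of $H$ under monomial changes of variables, reduce $H$ by $P^{\top}HP$ to the alternating normal form with blocks $\left(\begin{smallmatrix}0&d_i\\-d_i&0\end{smallmatrix}\right)$ and a zero block, and then match $\pideg(T)=\prod_i m_i$ against $\sqrt{h}=\prod_i m_i$ blockwise; all of these steps are sound, and the computation of $h$ for the normal form is exactly right. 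Two points deserve tightening. First, the monomial substitution $y_k=\prod_j x_j^{p_{jk}}$ is not an automorphism of $T$ but an isomorphism from the torus attached to $P^{\top}HP$ onto the torus attached to $H$; its bijectivity should be recorded (the PBW basis $y^c$ maps to nonzero scalar multiples of $x^{Pc}$, and $c\mapsto Pc$ is a bijection of $\mathbb{Z}^n$). Second, the Azumaya property of a rank-two quantum torus at a root of unity, while standard, is the one nontrivial input you invoke without proof or citation; you can in fact bypass it entirely: for the normal form, $T$ is a domain, its center $Z$ is the Laurent subalgebra generated by $y_i^{\pm m_i}, z_i^{\pm m_i}$ and the commuting variables, and $T$ is free of rank $\prod_i m_i^2$ over $Z$, so Posner's theorem gives that $\cf(T)=T\otimes_Z\cf(Z)$ is a central simple algebra of dimension $\prod_i m_i^2$ over $\cf(Z)$, whence $\pideg(T)=\prod_i m_i$ in one stroke, with no Azumaya or tensor-product bookkeeping needed. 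With those two repairs your argument is complete and, unlike the paper, self-contained.
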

In \cite{jo}, Jakobsen and Zhang gave an explicit 
expression of the PI degree for $\mathcal{M}\text{at}_n(q)$ over the field of characteristic zero. Infact they established the following result:
\begin{theom}\emph{(\cite[Theorem 3.1]{jo})}\label{cho}
Let $\mathbb{K}$ be a field of characteristic zero. If $q$ is a primitive $m$-th root of unity, then 
\[\pideg(\mathcal{M}\text{at}_n(q))=\pideg(\mathcal{O}_{\mathbf{q}}(\mathbb{K}^{n^2}))=\begin{cases}
m^{\frac{n^2}{2}},& n \ \ \text{even}\\
m^{\frac{n^2-1}{2}}, & n \ \ \text{odd}.
\end{cases}\]
where the $n^2\times n^2$ multiplicatively antisymmetric matrix $\mathbf{q}$ is as in \cite[eq 10]{jo}.
\end{theom}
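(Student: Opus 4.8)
The plan is to bypass the direct combinatorial computation in \cite{jo} and instead reduce the problem to a quantum affine space, exactly as the phrase ``derivation erasing process'' suggests. First I would record that, by the iterated skew polynomial presentation displayed above (and its evident analogue for general $n$), $\mathcal{M}\text{at}_n(q)$ is an iterated Ore extension $\mathbb{K}[Z_{11}][Z_{12};\rho_1]\cdots[Z_{nn};\sigma,\delta]$ in which each automorphism acts diagonally by a power of $q$ on the previously adjoined generators, and the only nonzero $\sigma$-derivations arise from the quadratic commutator relations $Z_{ij}Z_{st}=Z_{st}Z_{ij}+(q-1)Z_{sj}Z_{it}$. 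This is precisely the setting of Cauchon's deleting-derivations algorithm: applied to $\mathcal{M}\text{at}_n(q)$ it produces, inside a common Ore localization, a family of $n^2$ Cauchon generators that pairwise $q$-commute, so that $\mathcal{M}\text{at}_n(q)$ and the quantum affine space $\mathcal{O}_{\mathbf{q}}(\mathbb{K}^{n^2})$ --- with the antisymmetric exponent matrix recorded in \cite[eq.~10]{jo} --- share the same skew field of fractions.

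Since $q$ is a root of unity, both algebras are prime affine PI rings (Proposition \ref{finite}, whose proof carries over verbatim to arbitrary $n$, together with the analogous statement for quantum affine space). For a prime PI ring the PI degree equals the square root of the dimension of the (simple Artinian) Goldie quotient ring over its center, an invariant depending only on the associated quotient division ring; hence the isomorphism of fraction fields furnished by the deleting-derivations algorithm gives $\pideg(\mathcal{M}\text{at}_n(q))=\pideg(\mathcal{O}_{\mathbf{q}}(\mathbb{K}^{n^2}))$, the first equality of the theorem. To evaluate the right-hand side I would invoke Proposition \ref{quan}: writing $q_{ij}=q^{h_{ij}}$ and letting $H=(h_{ij})$ be the antisymmetric $n^2\times n^2$ integer matrix, the PI degree is $\sqrt{h}$, where $h$ is the order of the image of $\mathbb{Z}^{n^2}\xrightarrow{H}\mathbb{Z}^{n^2}\xrightarrow{\pi}(\mathbb{Z}/m\mathbb{Z})^{n^2}$. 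Reducing $H$ to Smith normal form $H=P\,\diagonal(d_1,\dots,d_{n^2})\,Q$ with $P,Q\in GL_{n^2}(\mathbb{Z})$, and using that $P$ and $Q$ stay invertible modulo $m$, this image has order $h=\prod_{i=1}^{n^2}\tfrac{m}{\gcdi(m,d_i)}$, so everything is controlled by the invariant factors of $H$.

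The core of the argument is then the linear algebra of $H$. The claim I would establish is twofold: every nonzero invariant factor of $H$ equals $1$, and $\ran(H)=n^2$ when $n$ is even while $\ran(H)=n^2-1$ when $n$ is odd. The first assertion is what makes the final answer independent of the particular root of unity, since each nonzero $d_i$ then contributes a full factor $m$ to $h$ and each zero $d_i$ contributes $1$. The parity split is forced by antisymmetry: an antisymmetric integer matrix has even rank, so for $n$ odd (hence $n^2$ odd) $H$ is singular, and the largest rank compatible with positive corank is $n^2-1$. To secure both claims I would use the explicit block pattern of $H$ coming from the $q$-commutation exponents of the Cauchon variables and exhibit a unimodular change of basis putting $H$ into the standard symplectic shape $\left(\begin{smallmatrix}0&I\\-I&0\end{smallmatrix}\right)$ for $n$ even (equivalently, checking $\mathrm{Pf}(H)=\pm 1$), and the same block plus a single zero row and column for $n$ odd, the degenerate direction being the explicit kernel vector, which one expects to be supported on the ``diagonal'' generators $Z_{ii}$. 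Granting this, $h=m^{n^2}$ for $n$ even and $h=m^{\,n^2-1}$ for $n$ odd, and taking square roots yields $m^{n^2/2}$ and $m^{(n^2-1)/2}$ respectively.

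The genuinely hard part is this last computation: controlling the Smith normal form of the $n^2\times n^2$ matrix $H$ uniformly in $n$. The reduction in the first two paragraphs is essentially formal once the Ore-extension and Cauchon hypotheses are verified and Proposition \ref{quan} is applied, whereas proving that the nonzero elementary divisors are units and isolating the one-dimensional kernel in the odd case demands a clean combinatorial description of the exponent pattern together with a carefully chosen sequence of integer row and column operations, the sign bookkeeping in the antisymmetric blocks being the main source of friction. I would note finally that the hypothesis $\chr\mathbb{K}=0$ enters only to guarantee that $q$ has exact order $m$ in the relevant localizations; the structural reduction and the arithmetic of $H$ are otherwise insensitive to the characteristic, so the argument in fact points toward a characteristic-free statement.
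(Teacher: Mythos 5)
Your overall architecture coincides with the paper's: erase the derivations to identify $\cf(\mathcal{M}\text{at}_n(q))$ with $\cf(\mathcal{O}_{\mathbf{q}}(\mathbb{K}^{n^2}))$, deduce equality of PI degrees, then evaluate $\pideg(\mathcal{O}_{\mathbf{q}}(\mathbb{K}^{n^2}))$ via Proposition \ref{quan} and the invariant factors of the exponent matrix. However, the tool you chose for the first step fails in exactly the situation the theorem concerns. Cauchon's deleting-derivations algorithm carries the standing hypothesis that each scalar $q_j$ in the skew relation $\sigma_j\delta_j=q_j\delta_j\sigma_j$ is \emph{not} a root of unity: the algorithm defines the new ($q$-commuting) generators by infinite series whose coefficients have the factors $(1-q_j)^{-k}$ and $[k]_{q_j}!$ in the denominator, and the $q_j$-integers $[k]_{q_j}$ vanish for suitable $k$ precisely when $q_j$ is a root of unity. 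For $\mathcal{M}\text{at}_n(q)$ the skew relations are $\sigma_{ij}\delta_{ij}=q\,\delta_{ij}\sigma_{ij}$ with $q$ a primitive $m$-th root of unity, so Cauchon's hypotheses are violated in the only case at issue. This is exactly why the paper does not invoke Cauchon: for the characteristic-zero statement it cites the derivation erasing of De Concini--Procesi \cite[Theorem 6.4]{dicon}, and to make the reduction characteristic-free it uses the Leroy--Matczuk erasing theorem \cite[Theorem 7]{lm2}, whose hypotheses are the $q$-skew relation with $q\neq 1$ (roots of unity allowed) together with the PI property --- hypotheses the paper explicitly verifies. Your first paragraph can be repaired by substituting one of these results for Cauchon, but as written the key step rests on a theorem that does not apply.

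On the second equality, your reduction via Proposition \ref{quan} and Smith normal form to the two claims (all nonzero invariant factors of $H$ are units; $H$ has rank $n^2$ for $n$ even and $n^2-1$ for $n$ odd) is sound, and the parity argument from antisymmetry is correct. But you then declare this ``the genuinely hard part'' and do not carry it out, so your proposal proves only the reduction; the actual arithmetic of $H$ is exactly where new content would be required. (The paper does not prove it either --- it outsources this computation to \cite[Lemma 5.7]{ar} and ultimately to \cite[Theorem 3.1]{jo} --- so on this half you and the paper are in the same position.) Finally, your closing remark that characteristic zero ``enters only to guarantee that $q$ has exact order $m$'' mislocates the hypothesis: the existence of a primitive $m$-th root of unity already forces $\chr\mathbb{K}\nmid m$, and characteristic zero is instead a hypothesis of the De Concini--Procesi erasing technique itself; removing it is precisely what requires the different Leroy--Matczuk argument, which is the point of the paper's discussion following the theorem.
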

The first equality of the above theorem follows from the derivation erasing over characteristic zero in \cite[Theorem 6.4]{dicon} due to De Concini and Procesi. The second equality can be proved using the key Proposition \ref{quan} for calculating PI degree along with \cite[Lemma 5.7]{ar}.
\par Now we aim to establish the above theorem for arbitrary characteristic. For that it is enough to show that the first equality is valid for arbitrary characteristic. Here we will use another derivation erasing process (independent of characteristic) duo to Leroy and Matczuk as in \cite[Theorem 7]{lm2}. We define an ordering on the set of generators $Z_{ij}$, $1\leq i,j\leq n$ of $\mathcal{M}\text{at}_n(q)$ by lexicographic ordering on the set of indices. With this, $\mathcal{M}\text{at}_n(q)$ can be expressed as an iterated skew polynomial ring of the form:
\[\mathbb{K}[Z_{11}][Z_{12},\sigma_{12},\delta_{12}]\cdots [Z_{nn},\sigma_{nn},\delta_{nn}]\]
where the $\sigma_{ij}$ are $\mathbb{K}$-linear automorphisms and the $\delta_{ij}$ are $\mathbb{K}$-linear $\sigma_{ij}$-derivation such that for all $1\leq i,j\leq n$:
\[\sigma_{ij}(Z_{rs})=\begin{cases}
Z_{rs},r\leq i,s<j\\
qZ_{rs},r<i
\end{cases}\ \ 
\delta_{ij}(Z_{rs})=\begin{cases}
(q-1)Z_{rj}Z_{is},& r<i,s<j\\
0,&\text{otherwise}.
\end{cases}\]
One can easily verify the following skew relations for the algebra $\mathcal{M}\text{at}_n(q)$ \[\sigma_{ij}\delta_{ij}=q\delta_{ij}\sigma_{ij},~(q\neq 1),~\forall~~1\leq i,j\leq n.\]  Then the derivation erasing process (independent of characteristic) in \cite[Theorem 7]{lm2} provides $\cf {\mathcal{M}\text{at}_n(q)}\cong \cf \mathcal{O}_{\mathbf{q}}(\mathbb{K}^{n^2})$ and hence $\pideg \mathcal{M}\text{at}_n(q)=\pideg \mathcal{O}_{\mathbf{q}}(\mathbb{K}^{n^2})$, where the $(n^2\times n^2)$-matrix of relations $\mathbf{q}$ is as in Theorem \ref{cho}. Thus we conclude that Theorem \ref{cho} is true for arbitrary characteristic. In particular $\pideg(\mathcal{M}\text{at}_2(q))=m^2$.

\section{\bf{Simple Modules over $\mathcal{M}\text{at}_2(q)$}}
Let $q$ be a primitive $m$-th root of unity and $\mathcal{N}$ be a simple module over $\mathcal{M}\text{at}_2(q)$. Then by Proposition $\ref{sim}$, the $\mathbb{K}$-dimension of $\mathcal{N}$ is a finite and bounded above by $\pideg({\mathcal{M}\text{at}_2(q)})$. In this section we wish to classify simple $\mathcal{M}\text{at}_2(q)$-modules. Note that the elements $\de_{q},Z_{12}$ and $Z_{21}$ are normal, and $\de_{q}$ and $Z_{12}Z_{21}$ commuting elements in $\mathcal{M}\text{at}_2(q)$. Then there is a common eigenvector $v$ in $\mathcal{N}$ of the commuting operators $\de_{q}$ and $Z_{12}Z_{21}$. Put
\[v\de_{q}=\lambda_1v,\ vZ_{12}Z_{21}=\lambda_2v,\ \ \lambda_1,\lambda_2\in \mathbb{K}.\]
Now depending on the scalars $\lambda_1$ and $\lambda_2$ along with the Lemma \ref{itn}, we will consider the following subsections.
\subsection{Simple $\de_{q}$-torsionfree and $Z_{12}Z_{21}$-torsion $\mathcal{M}\text{at}_2(q)$-modules:}\label{ss1}
In this case $\lambda_1\neq 0$ and $\lambda_2=0$. Then $vZ_{12}Z_{21}=0$ implies either $vZ_{12}=0$ or $vZ_{12}\neq 0$.\\
\textbf{Case 1:} Suppose $vZ_{12}=0$. Then the action of $Z_{12}$ on $\mathcal{N}$ is trivial. Now $\mathcal{N}$ becomes a simple module over the factor algebra $\mathcal{M}\text{at}_2(q)/\langle Z_{12}\rangle$ which is isomorphic to a quantum affine space $\mathcal{O}_{\mathbf{q}}(\mathbb{K}^3)$ of rank $3$ under the correspondence $\overline{Z}_{11}\mapsto x_1, \overline{Z}_{21}\mapsto x_2, \ \overline{Z}_{22}\mapsto x_3$, where \[\mathbf{q}=\begin{pmatrix}
1&q^{-1}&1\\
q&1&1\\
1&1&1
\end{pmatrix}.\] 
Now $v\overline{\de_{q}}=v(\overline{Z}_{11}\overline{Z}_{22}-\overline{Z}_{12}\overline{Z}_{21})=\lambda_1v$ implies $vx_1x_3=\lambda_1v\neq 0$ in $\mathcal{O}_{\mathbf{q}}(\mathbb{K}^3)$. Thus in this case $\mathcal{N}$ is $x_1$ and $x_3$-torsion free simple $\mathcal{O}_{\mathbf{q}}(\mathbb{K}^3)$-module.\\
\textbf{Case 2:} Suppose $vZ_{12}\neq 0$. Then the action of $Z_{21}$ on $\mathcal{N}$ is trivial. Now $\mathcal{N}$ becomes a simple module over the factor algebra $\mathcal{M}\text{at}_2(q)/\langle Z_{21}\rangle$ which is isomorphic to a quantum affine space $\mathcal{O}_{\mathbf{q}}(\mathbb{K}^3)$ of rank $3$ under the correspondence $\overline{Z}_{11}\mapsto x_1, \overline{Z}_{12}\mapsto x_2, \ \overline{Z}_{22}\mapsto x_3$, where \[\mathbf{q}=\begin{pmatrix}
1&1&1\\
1&1&q^{-1}\\
1&q&1
\end{pmatrix}.\]  Similarly in this case $\mathcal{N}$ is $x_1$ and $x_3$-torsion free simple $\mathcal{O}_{\mathbf{q}}(\mathbb{K}^3)$-module.
\par In either case $\mathcal{N}$ is an  $x_1,x_3$-torsion free simple module over the quantum affine space $\mathcal{O}_{\mathbf{q}}(\mathbb{K}^3)$ rank $3$. The simple modules over such quantum affine spaces have been classified already in \cite{smsb}. Here the possible $\mathbb{K}$-dimensions of $\mathcal{N}$ is $1$ or $m$.
\subsection{Simple $\de_{q}$-torsion and $Z_{12}Z_{21}$-torsion free $\mathcal{M}\text{at}_2(q)$-modules:} If $\lambda_1=0$ and $\lambda_2\neq 0$, then the action of $\de_{q}$ on $\mathcal{N}$ is trivial. Hence $\mathcal{N}$ becomes a simple module over the factor algebra $\mathcal{M}\text{at}_2(q)/\langle \de_{q}\rangle$ which is isomorphic to the algebra $\mathcal{O}_{\mathbf{q}}(\mathbb{K}^4)/\langle x_1x_4-x_2x_3\rangle$ under the correspondence $\overline{Z}_{11}\mapsto x_1,\ \overline{Z}_{12}\mapsto x_2,\ \overline{Z}_{21}\mapsto x_3, \ \overline{Z}_{22}\mapsto x_4,$ where \[\mathbf{q}=\begin{pmatrix}
1&1&q^{-1}&q^{-1}\\
1&1&q^{-1}&q^{-1}\\
q&q&1&1\\
q&q&1&1
\end{pmatrix}.\]
Here $\lambda_2\neq 0$ implies $\mathcal{N}$ is $x_2$ and $x_3$-torsion free and hence from the relation $x_1x_4=x_2x_3$ we conclude that $\mathcal{N}$ is $x_1$ and $x_4$-torsion free also. Thus $\mathcal{N}$ is $x_i$-torsion free simple module over $\mathcal{O}_{\mathbf{q}}(\mathbb{K}^4)/\langle x_1x_4-x_2x_3\rangle$.  In this case one can classified simple modules $\mathcal{N}$ by taking common eigen vector of the commuting operators $x_1,x_2$ and the possible $\mathbb{K}$-dimension of $\mathcal{N}$ is $m$ only (cf. \cite{smsb}).

\subsection{Simple $\de_{q},Z_{12}Z_{21}$-torsion $\mathcal{M}\text{at}_2(q)$-modules:} If $\lambda_1=0$ and $\lambda_2=0$, then $\mathcal{N}$ becomes a simple module over the factor algebra \[\mathcal{M}\text{at}_2(q)/\langle \de_{q},Z_{12}Z_{21}\rangle\cong \mathcal{O}_{\mathbf{q}}(\mathbb{K}^4)/\langle x_1x_4,x_2x_3\rangle.\] Here the action of $x_1$ or $x_4$ on $\mathcal{N}$ is trivial and the action of $x_2$ or $x_3$ on $\mathcal{N}$ is trivial. In this case one can classified simple module $\mathcal{N}$ and the possible $\mathbb{K}$-dimensions of $\mathcal{N}$ are $1$ or $m$ (cf. \cite{smsb}).
\par In all the previous three observations simple $\mathcal{M}\text{at}_2(q)$-module collapses into a structure of a simple module over a much simpler algebra namely quantum affine space or its factor.
\subsection{Simple $\de_{q},Z_{12}Z_{21}$-torsionfree $\mathcal{M}\text{at}_2(q)$-modules:}\label{ssrf}
In view of paragraphs $(2.1)-(2.3)$, henceforth we can assume that $\lambda_1\neq 0,\lambda_2\neq 0$, that is, $\mathcal{N}$ is $\de_{q}$ and $Z_{12}Z_{21}$-torsion free simple $\mathcal{M}\text{at}_2(q)$-module. Suppose $q$ be a primitive $m$-th root of unity. Since each of the elements \begin{equation}\label{1}
    Z^m_{11},Z^m_{21},Z_{22}^m,\de_{q},Z_{12}Z_{21}
\end{equation} of $\mathcal{M}\text{at}_2(q)$ commutes, there is a common eigenvector $v$ in $\mathcal{N}$ corresponding the operators (\ref{1}). Put 
\[vZ^m_{11}=\alpha v,\ vZ^m_{21}=\beta v,\ vZ^m_{22}=\gamma v,\ v\de_{q}=\lambda_1v,\ vZ_{12}Z_{21}=\lambda_2v,\]
for some $\alpha,\beta\in \mathbb{K}$ and $\lambda_1,\lambda_2\in \mathbb{K}^*$. Note that the central elements  $Z^m_{11},Z^m_{21}$ and $Z_{22}^m$ act as multiplication by scalar on $\mathcal{N}$, by Schur's lemma. As $Z_{21}$ is a normal element and $\lambda_2\neq 0$, therefore we have $\beta\neq 0$. In the following we shall determine the structure of simple $\mathcal{M}\text{at}_2(q)$-module $\mathcal{N}$ according to the scalars:\\
\textbf{Case I:} Let us first consider $\alpha\neq 0$. Then the vectors $e(a,b):=vZ_{11}^aZ_{21}^b$ where $0 \leq a,b \leq m-1$ of $\mathcal{N}$ are non-zero, as $\alpha\neq 0,\ \beta\neq 0$. Consider the vector subspace $\mathcal{N}_1$ of $\mathcal{N}$ spanned by these vectors $e(a,b)$. Now we claim that $\mathcal{N}_1$ is invariant under the action of $\mathcal{M}\text{at}_2(q)$. In fact after some straightforward calculation using the defining relations of $\mathcal{M}\text{at}_2(q)$ and the identities in Lemma \ref{c1}, we have 
   \[\begin{array}{l}
    e(a,b)Z_{11}=
    \begin{cases}
    q^{b}e(a+1,b),& \text{when}\ \ a<m-1\\
    q^b\alpha e(0,b),&\text{when}\ \ a=m-1
    \end{cases}\\
    e(a,b)Z_{12}=\begin{cases}q^{(b-a)}\lambda_2e(a,b-1),& \text{when}\ \  b>0 \\
      \beta^{-1}\lambda_2q^{-a} e(a,m-1),& \text{when} \ \ b=0 
       \end{cases}\\
       e(a,b)Z_{21}=\begin{cases}e(a,b+1),& \text{when}\ \ b<m-1\\
       \beta e(a,0),& \text{when}\ \ b=m-1
       \end{cases}\\
       e(a,b)Z_{22}=\begin{cases} \left(\lambda_1+q\lambda_2-q(1-q^{-a})\lambda_2\right)e(a-1,b),& \text{when} \ \ a>0 \\
       \alpha^{-1}\left(\lambda_1+q\lambda_2\right)e(m-1,b),& \text{when} \ \ a=0
       \end{cases}
       \end{array}\]
       Therefore owing to simpleness of $\mathcal{N}$, $\mathcal{N}=\mathcal{N}_1$. The $\mathcal{M}\text{at}_2(q)$-module $\mathcal{N}_1$ given above is denoted by $(\mathcal{N}_1,\alpha,\beta,\lambda_1,\lambda_2)$. Now the following result deciphers the $\mathbb{K}$-dimension of $\mathcal{N}_1$.
\begin{theom}\label{t1}
The simple $\mathcal{M}\text{at}_2(q)$-module $(\mathcal{N}_1,\alpha,\beta,\lambda_1,\lambda_2)$ has dimension $m^{2}$.
\end{theom}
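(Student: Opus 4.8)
The plan is to show that the $m^{2}$ spanning vectors $e(a,b)$, $0\le a,b\le m-1$, are in fact linearly independent. Proposition \ref{sim} already gives $\dim_{\mathbb{K}}\mathcal{N}_1\le \pideg(\mathcal{M}\text{at}_2(q))=m^{2}$, so once independence is established the dimension is forced to be exactly $m^{2}$. First I would record that each $e(a,b)$ is nonzero: since $Z_{11}^m$ and $Z_{21}^m$ act as the nonzero scalars $\alpha,\beta$, the normal elements $Z_{11}$ and $Z_{21}$ act invertibly on $\mathcal{N}_1$ by Lemma \ref{itn}, whence $e(a,b)=vZ_{11}^aZ_{21}^b\neq 0$ because $v\neq 0$.

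The heart of the argument is to produce two operators that are simultaneously diagonalized by the family $\{e(a,b)\}$. Reading the action table left to right (recall these are right modules), $Z_{11}$ raises the first index while $Z_{22}$ lowers it, and $Z_{21}$ raises the second index while $Z_{12}$ lowers it, all cyclically; hence the products $Z_{11}Z_{22}$ and $Z_{12}Z_{21}$ ought to fix each $e(a,b)$. Substituting the four displayed formulas and treating the wrap-around cases $a=m-1,\ a=0$ and $b=m-1,\ b=0$ separately (using $q^{m}=1$ to reconcile them), I expect
\[
e(a,b)\,Z_{11}Z_{22}=q^{b}\bigl(\lambda_1+q^{-a}\lambda_2\bigr)\,e(a,b),\qquad
e(a,b)\,Z_{12}Z_{21}=\lambda_2\,q^{\,b-a}\,e(a,b).
\]
Thus each $e(a,b)$ is a joint eigenvector with eigenvalue pair $\bigl(f(a,b),g(a,b)\bigr)$, where $f(a,b)=q^{b}\lambda_1+q^{\,b-a}\lambda_2$ and $g(a,b)=\lambda_2\,q^{\,b-a}$.

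What remains, and what I expect to be the crux, is to verify that the map $(a,b)\mapsto\bigl(f(a,b),g(a,b)\bigr)$ is injective on $\{0,\dots,m-1\}^{2}$. The value of $g$ recovers the residue $s:=b-a \bmod m$ (this uses $\lambda_2\neq 0$); fixing $s$, one has $f(a,b)=q^{b}\lambda_1+q^{s}\lambda_2$, so $f$ pins down $q^{b}$ and hence $b$ precisely because $\lambda_1\neq 0$, after which $a$ is determined by $b-a\equiv s$. This is exactly where the torsion-free hypotheses $\lambda_1,\lambda_2\in\mathbb{K}^{*}$ are essential: were $\lambda_1$ to vanish, $f$ would no longer separate the first index and the module would degenerate to the smaller dimensions found in the earlier subsections. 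With injectivity in hand, a standard eigenvalue-separation argument closes the proof: given any linear dependence $\sum c_{a,b}e(a,b)=0$, repeatedly applying $Z_{11}Z_{22}$ and $Z_{12}Z_{21}$ and subtracting the appropriate eigenvalue multiples shortens the relation term by term, until a single nonzero $e(a_0,b_0)$ is forced to vanish, a contradiction. Hence the $e(a,b)$ form a basis and $\dim_{\mathbb{K}}\mathcal{N}_1=m^{2}$.
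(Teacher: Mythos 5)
Your proposal is correct and follows essentially the paper's own proof: both arguments exhibit the $m^2$ vectors $e(a,b)$ as joint eigenvectors of a commuting pair of operators with pairwise distinct joint eigenvalues and then run the standard eigenvalue-separation induction, and your pair $(Z_{11}Z_{22},\,Z_{12}Z_{21})$ carries exactly the same spectral data as the paper's pair $(\de_q,\,Z_{12}Z_{21})$ since $\de_q=Z_{11}Z_{22}-Z_{12}Z_{21}$, with your eigenvalue formulas and the injectivity of $(a,b)\mapsto\bigl(f(a,b),g(a,b)\bigr)$ both checking out. One small slip in your justification that $e(a,b)\neq 0$: $Z_{11}$ is \emph{not} a normal element of $\mathcal{M}\text{at}_2(q)$ (the relation $Z_{22}Z_{11}=Z_{11}Z_{22}+(q-1)Z_{12}Z_{21}$ obstructs this), so Lemma \ref{itn} does not apply to it; the conclusion nevertheless stands because the operator $T\colon x\mapsto xZ_{11}$ satisfies $T^m=\alpha\,\mathrm{id}$ with $\alpha\neq 0$ and is therefore invertible.
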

\begin{proof}
Here we will use induction on the $r$-elements subsets consisting of the non zero vectors $e(a,b)$. Note that for $e(a_1,b_1) \neq e(a_2,b_2)$, both of these vectors are eigen vectors of the operator det$_{q}$ or $Z_{12}Z_{21}$ on $\mathcal{N}_1$ with distinct eigen values. Indeed, if $b_1\neq b_2$, then $e(a_1,b_1)$ and $e(a_2,b_2)$ are eigen vectors of det$_{q}$ corresponding to the distinct eigen values $q^{b_1}\lambda_1$ and $q^{b_2}\lambda_1$ respectively. If $b_1 =b_2$ and $a_1 \neq a_2$ then $e(a_1,b_1)$ and $e(a_2,b_2)$ are eigen vectors of $Z_{12}Z_{21}$ corresponding to the distinct eigen values $q^{b_1-a_1}\lambda_2$ and $q^{b_2-a_2}\lambda_2$ respectively.
\par Let the result be true for any such $k$ vectors. Let $S:=\{e\left(a^{(i)},b^{(i)}\right):i=1,\cdots,k+1\}$ be a set of $k+1$ such vectors. Suppose  
\begin{equation}\label{sum}
  p:=\sum_{i=1}^{k+1} \zeta_i~e\left(a^{(i)},b^{(i)}\right)=0
\end{equation} for some $\zeta_i\in \mathbb{K}$. Now $e\left(a^{(k)},b^{(k)}\right) \neq e\left(a^{(k+1)},b^{(k+1)}\right)$. Without loss of generality let us assume both these vectors have distinct eigen values $\nu_k$ and $\nu_{k+1}$ respectively corresponding to the operator det$_{q}$. Now 
\[0=p\de_{q}-\nu_{k+1}p=\sum_{i=1}^{k} \zeta_i(\nu_i-\nu_{k+1})~e\left(a^{(i)},b^{(i)}\right).
\]
Using induction hypothesis,
\begin{center}
    $\zeta_i(\nu_i-\nu_{k+1})=0$ for all $i=1,\cdots,k$.
\end{center}
As $\nu_k \neq \nu_{k+1}$, $\zeta_k=0$.
Putting $\zeta_k=0$ in (\ref{sum}) and using induction hypothesis we have $\zeta_i=0$ for all $i=1,\cdots,k+1$. Hence the result follows.
\end{proof}
\begin{remak}
The simple $\mathcal{M}\text{at}_2(q)$-modules $(\mathcal{N}_1,\alpha,\beta,\lambda_1,\lambda_2)$ and $(\mathcal{N}_1,\alpha',\beta',\lambda'_1,\lambda'_2)$ are isomorphic if and only if $\alpha=\alpha',\beta=\beta',\lambda_1=q^b\lambda'_1$ and $\lambda_2=q^{b-a}\lambda'_2$, for $0\leq a,b\leq m-1$.
\end{remak}
~\\\textbf{Case II:} Next consider $\alpha=0$. That is, $vZ^m_{11}=0$. So there exists $0\leq r\leq m-1$ such that $vZ_{11}^{r}\neq 0$ and $vZ^{r+1}_{11}=0$. Define $u:=vZ_{11}^{r}\neq 0$. Then we compute that $uZ^m_{22}=\gamma u$. We now claim that the vectors $uZ^a_{22},\ 0\leq a\leq m-1$ are nonzero for any choice of $\gamma\in \mathbb{K}$. If $\gamma\neq 0$, then we are done. Otherwise if $\gamma=0$, suppose $s$ be the smallest integer with $1 \leq s \leq m$ such that $uZ_{22}^{s-1} \neq 0$ and $uZ_{22}^s=0$. Then after simplifying the equality $uZ_{22}^sZ_{11}=0$ we have
    \[0=uZ_{22}^sZ_{11}=u\Big(Z_{11}Z_{22}^s-(1-q^s)Z_{12}Z_{21}Z_{22}^{s-1}\Big)=(q^s-1)q^{-r}\lambda_2 \ uZ_{22}^{s-1}.\]
This implies $s$ is the smallest index with $1\leq s\leq m$ such that $q^s=1$. Thus we have $s=m$ and the claim follows. 
\par Now by the above claim along with the $\beta\neq 0$, it follows that the vectors $e(a,b):=uZ_{22}^aZ_{21}^b$, where $0 \leq a,b \leq m-1$ of $\mathcal{N}$ are non-zero. Set $\mathcal{N}_2$ be the vector subspace of $\mathcal{N}$ spanned by these non zero vectors $e(a,b)$. One can easily verify that $\mathcal{N}_2$ is $\mathcal{M}\text{at}_2(q)$-invariant. In fact after some straightforward computation we have the following:
\[\begin{array}{l}
        e(a,b)Z_{11}=\begin{cases} 
        q^{b}(q^{a}-1)q^{-r}\lambda_2\ e(a-1,b),&\text{when} \ \ a>0 \\
      0,&\text{when} \ \ a=0
       \end{cases}\\
       e(a,b)Z_{12}=\begin{cases} q^{(b+a)}q^{-r}\lambda_2e(a,b-1),&\text{when} \ \ b>0 \\
      \beta^{-1}q^{a}q^{-r}\lambda_2 e(a,m-1),& \text{when} \ \ b=0
       \end{cases}\\
       e(a,b)Z_{21}=\begin{cases}
       e(a,b+1),& \text{when} \ \ 0\leq b\leq m-2\\
       \beta e(a,0),& \text{when} \ \  b=m-1
       \end{cases}\\
       e(a,b)Z_{22}= \begin{cases}e(a+1,b),& \text{when} \ \ a < m-1\\
       \gamma e(0,b),&\text{when} \ \ a=m-1
       \end{cases}
       \end{array}\]
Therefore $\mathcal{N}$ being simple module, we have $\mathcal{N}=\mathcal{N}_2$. The $\mathcal{M}\text{at}_2(q)$-module $\mathcal{N}_2$ given above is denoted by $(\mathcal{N}_2,\beta,\gamma,\lambda_2)$. Now the following result ensures the $\mathbb{K}$-dimension of $\mathcal{N}_2$ and the proof is parallel to the Theorem \ref{t1}.
\begin{theom}
The simple $\mathcal{M}\text{at}_2(q)$-module $(\mathcal{N}_2,\beta,\gamma,\lambda_2)$ has dimension $m^{2}$.
\end{theom}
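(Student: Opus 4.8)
The plan is to follow the template of Theorem \ref{t1}. By construction $\mathcal{N}_2$ is the span of the $m^2$ nonzero vectors $e(a,b)=uZ_{22}^aZ_{21}^b$ with $0\leq a,b\leq m-1$, so $\dim_{\mathbb{K}}\mathcal{N}_2\leq m^2$, and it suffices to prove that these vectors are linearly independent. As in the previous case, the mechanism is that the two commuting normal elements $\de_{q}$ and $Z_{12}Z_{21}$ act diagonally on the $e(a,b)$ and that their eigenvalues separate any two distinct basis vectors.

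First I would record the eigenvalues. From $u\de_{q}=\lambda_1 u$ together with the commutation rules $Z_{21}\de_{q}=q\de_{q}Z_{21}$ and $Z_{22}\de_{q}=\de_{q}Z_{22}$, a short computation yields $e(a,b)\de_{q}=q^{b}\lambda_1\,e(a,b)$. Likewise, using $Z_{11}^{r}Z_{12}Z_{21}=q^{-r}Z_{12}Z_{21}Z_{11}^{r}$ and $vZ_{12}Z_{21}=\lambda_2 v$ one gets $u(Z_{12}Z_{21})=q^{-r}\lambda_2 u$, and then the relations $Z_{22}(Z_{12}Z_{21})=q(Z_{12}Z_{21})Z_{22}$ and $Z_{21}(Z_{12}Z_{21})=q(Z_{12}Z_{21})Z_{21}$ give $e(a,b)(Z_{12}Z_{21})=q^{a+b-r}\lambda_2\,e(a,b)$. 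Here $\lambda_1,\lambda_2\in\mathbb{K}^*$ since $\mathcal{N}$ is $\de_{q}$- and $Z_{12}Z_{21}$-torsionfree (one even checks $\lambda_1=-q^{-r}\lambda_2$ from $uZ_{11}=0$, though only $\lambda_1\neq0$ is needed).

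Next I would observe that these eigenvalues separate the basis. If $e(a_1,b_1)\neq e(a_2,b_2)$ with $b_1\neq b_2$, then they are $\de_{q}$-eigenvectors with distinct eigenvalues $q^{b_1}\lambda_1\neq q^{b_2}\lambda_1$, because $q$ is a primitive $m$-th root of unity and $0\leq b_1,b_2\leq m-1$. If instead $b_1=b_2$ and $a_1\neq a_2$, then they are $Z_{12}Z_{21}$-eigenvectors with distinct eigenvalues $q^{a_1+b_1-r}\lambda_2\neq q^{a_2+b_2-r}\lambda_2$. Thus any two distinct $e(a,b)$ are distinguished by at least one of $\de_{q}$ and $Z_{12}Z_{21}$.

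Finally, the linear independence follows by the same induction on the number of vectors as in Theorem \ref{t1}. Given a relation $\sum_{i=1}^{k+1}\zeta_i\,e(a^{(i)},b^{(i)})=0$ among distinct basis vectors, I would choose the operator $T\in\{\de_{q},Z_{12}Z_{21}\}$ separating $e(a^{(k)},b^{(k)})$ from $e(a^{(k+1)},b^{(k+1)})$, with respective eigenvalues $\nu_k\neq\nu_{k+1}$; applying $T-\nu_{k+1}$ annihilates the last term and produces a relation on $k$ vectors, so the induction hypothesis forces $\zeta_k=0$, after which the hypothesis applies again to give all $\zeta_i=0$. Since the inductive step is verbatim the one in Theorem \ref{t1}, the only points requiring care are the bookkeeping of the $q$-exponents in the eigenvalue computation and the verification that for every pair of distinct basis vectors one of the two commuting operators has distinct eigenvalues; neither is a genuine obstacle, so this completes the proof that $\dim_{\mathbb{K}}\mathcal{N}_2=m^{2}$.
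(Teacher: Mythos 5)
Your proposal is correct and takes essentially the same approach as the paper: the paper's own proof consists of the single remark that the argument is ``parallel to Theorem \ref{t1}'', and your write-up is precisely that parallel argument, with the eigenvalue bookkeeping filled in correctly ($e(a,b)\de_{q}=q^{b}\lambda_1 e(a,b)$ and $e(a,b)Z_{12}Z_{21}=q^{a+b-r}\lambda_2 e(a,b)$, with $\lambda_1=-q^{-r}\lambda_2\neq 0$) followed by the same separation-and-induction step. No gaps.
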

\begin{remak}
The simple $\mathcal{M}\text{at}_2(q)$-modules $(\mathcal{N}_2,\beta,\gamma,\lambda_2)$ and $(\mathcal{N}_2,\beta',\gamma',\lambda'_2)$ are isomorphic if and only if $\beta=\beta',\gamma=\gamma'$ and $\lambda_2=q^{a+b}\lambda'_2$, for $0\leq a,b\leq m-1$.
\end{remak}
\begin{remak}
It is clear from the action of $\mathcal{M}\text{at}_2(q)$ that there does not exist any isomorphism between the above two types of simple $\mathcal{M}\text{at}_2(q)$-modules. Indeed  no non zero element of $\mathcal{N}_1$ is annihilated by $Z_{11}$, but $e(0,b)$ in $\mathcal{N}_2$ is annihilated by $Z_{11}$.
\end{remak}
\par Finally the above discussions lead us to the main result of this section which provides an opportunity for classification of simple $\mathcal{M}\text{at}_2(q)$-modules in terms of scalar parameters.
\begin{theom}
Let $q$ be a primitive $m$-th root of unity. Then each simple $\de_{q},Z_{12}Z_{21}$-torsionfree $\mathcal{M}\text{at}_2(q)$-module is isomorphic to one of the following simple $\mathcal{M}\text{at}_2(q)$-modules:
\begin{enumerate}
    \item $(\mathcal{N}_1,\alpha,\beta,\lambda_1,\lambda_2)$ for some $(\alpha,\beta,\lambda_1,\lambda_2)\in(\mathbb{K}^*)^4$
    \item $(\mathcal{N}_2,\beta,\gamma,\lambda_2)$ for some $(\beta,\lambda_2)\in(\mathbb{K}^*)^2$ and $\gamma\in\mathbb{K}$.
\end{enumerate}
\end{theom}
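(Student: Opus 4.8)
The statement is a consolidation of the two exhaustive cases already treated in Subsection \ref{ssrf}, so the plan is to organize the dichotomy cleanly and confirm that nothing falls outside it. I begin with an arbitrary simple $\de_{q},Z_{12}Z_{21}$-torsionfree module $\mathcal{N}$ and extract, as above, a common eigenvector $v$ of the commuting family $Z_{11}^m,Z_{21}^m,Z_{22}^m,\de_{q},Z_{12}Z_{21}$ with eigenvalues $\alpha,\beta,\gamma,\lambda_1,\lambda_2$. The torsionfreeness hypothesis immediately gives $\lambda_1,\lambda_2\in\mathbb{K}^*$, and the normality of $Z_{21}$ together with $\lambda_2\neq 0$ forces $\beta\neq 0$ via Lemma \ref{itn}. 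Thus the only scalar whose vanishing is not predetermined is $\alpha=vZ_{11}^m$, and the whole classification rests on the single dichotomy $\alpha\neq 0$ versus $\alpha=0$.

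In the branch $\alpha\neq 0$ I would invoke the explicit generator actions on the vectors $e(a,b)=vZ_{11}^aZ_{21}^b$ recorded above: these formulas show that the span $\mathcal{N}_1$ of the $m^2$ vectors $e(a,b)$ with $0\le a,b\le m-1$ is stable under all four generators, so by simplicity $\mathcal{N}=\mathcal{N}_1$. Since here $\alpha,\beta,\lambda_1,\lambda_2$ are all nonzero, the defining tuple lies in $(\mathbb{K}^*)^4$ and $\mathcal{N}\cong(\mathcal{N}_1,\alpha,\beta,\lambda_1,\lambda_2)$, which is alternative (1). In the complementary branch $\alpha=0$ I would replace $v$ by $u=vZ_{11}^r$, where $r$ is the largest exponent with $vZ_{11}^r\neq 0$; the nonvanishing of $uZ_{22}^aZ_{21}^b$ for all admissible $a,b$ was established via the $Z_{22}$-torsion computation (where $q^s=1$ forces $s=m$), and the same invariance-plus-simplicity argument yields $\mathcal{N}=\mathcal{N}_2\cong(\mathcal{N}_2,\beta,\gamma,\lambda_2)$. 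Here $\beta,\lambda_2\in\mathbb{K}^*$ but $\gamma$ is genuinely unconstrained, since the $Z_{22}^m$-eigenvalue is allowed to vanish, giving alternative (2).

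To close the argument I would record that the two families never overlap: no nonzero vector of $\mathcal{N}_1$ is killed by $Z_{11}$, whereas $e(0,b)\in\mathcal{N}_2$ is, so the dichotomy is also a genuine disjunction, and the parameter ranges are exactly those asserted. For a fully self-contained classification I would additionally verify the converse, namely that for every admissible tuple the displayed action tables are consistent with the defining relations of $\mathcal{M}\text{at}_2(q)$ and define a simple module. Consistency is a finite check of the six defining relations on the basis $e(a,b)$, and simplicity follows by showing any nonzero vector is cyclic: using that $\beta\neq 0$ makes the $Z_{21}$-action a cyclic permutation of the $b$-index and $\lambda_2\neq 0$ makes the $Z_{12}$-action its inverse, one can return to a chosen $e(a,b)$ and then sweep the $a$-index through the $Z_{11}$- and $Z_{22}$-actions.

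I expect the main obstacle to be this converse consistency/simplicity verification rather than the forward bookkeeping. One must confirm that the two intertwined actions of $Z_{11}$ and $Z_{22}$, which behave inverse-like on the $a$-index with the scalar $\lambda_1+q\lambda_2$ appearing in the wrap-around, genuinely respect the relation $Z_{22}Z_{11}=Z_{11}Z_{22}+(q-1)Z_{12}Z_{21}$ together with Lemma \ref{c1}, so that the $m^2$-dimensional space carries a bona fide $\mathcal{M}\text{at}_2(q)$-module structure. Tracking the powers of $q$ in the wrap-around cases is where exponent errors are most likely, so that is the step I would write out most carefully.
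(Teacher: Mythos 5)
Your proposal is correct and takes essentially the same route as the paper: the paper's proof of this theorem is precisely the preceding discussion in Subsection \ref{ssrf} (a common eigenvector of $Z_{11}^m,Z_{21}^m,Z_{22}^m,\de_{q},Z_{12}Z_{21}$, the observation that torsionfreeness forces $\lambda_1,\lambda_2\neq 0$ and, via normality of $Z_{21}$, $\beta\neq 0$, followed by the dichotomy $\alpha\neq 0$ versus $\alpha=0$ yielding $\mathcal{N}_1$ and $\mathcal{N}_2$ by invariance plus simplicity), which you reproduce faithfully, including the non-overlap remark. The converse consistency/simplicity verification you flag is additional care beyond what the paper records, since in the forward direction the action tables arise inside an already given simple module and so are automatically consistent.
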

Thus in view of subsections (\ref{ss1})-(\ref{ssrf}), one can conclude that the simple $\de_{q},Z_{12}Z_{21}$-torsion free $\mathcal{M}\text{at}_2(q)$-modules have only maximal $\mathbb{K}$-dimension which is equal to $\pideg (\mathcal{M}\text{at}_2(q))$.

\section{\bf{Finite Dimensional Indecomposable $\mathcal{M}\text{at}_2(q)$-Modules}}
In this section we aim to construct some finite dimensional indecomposable modules over $\mathcal{M}\text{at}_2(q)$. Let $q$ be a primitive $m$-th root of unity. Let $\mathcal{B}$ be the subalgebra of $\mathcal{M}\text{at}_2(q)$ generated by the monomials $Z_{11},Z_{12}$ and $Z_{21}^{m}$. Observe that $\mathcal{B}$ is a commutative polynomial algebra $\mathbb{K}[Z_{11},Z_{12},Z^m_{21}]$. Then for $\underline{\lambda}:=(\lambda_1,\lambda_2)\in(\mathbb{K}^*)^2$, there is a one dimensional $\mathcal{B}$-module $\mathbb{K}\cong\mathbb{K}(\underline{\lambda})$ given by
\[vZ_{11}=0,\ vZ_{12}=\lambda_1v,\ vZ_{21}^{m}=\lambda_2v\]
for $v\in \mathbb{K}$. Define the Verma module $M(\underline{\lambda})$ by
\[M(\underline{\lambda}):=\mathbb{K}(\underline{\lambda})\otimes_{\mathcal{B}}\mathcal{M}\text{at}_2(q).\]
It follows from Proposition \ref{imp} that $\mathcal{M}\text{at}_2(q)$ is a free left $\mathcal{B}$-module with basis \[\{Z_{21}^aZ_{22}^b~|~0\leq a\leq m-1, b\geq 0\},\]
and therefore $M(\underline{\lambda})$ has a vector space basis \[f(a,b):=v \otimes Z_{21}^aZ_{22}^b,\ \ 0\leq a\leq m-1, b\geq 0.\]
Thus $M(\underline{\lambda})$ carries an infinite dimensional right $\mathcal{M}\text{at}_2(q)$-module. The explicit action of the generators of $\mathcal{M}\text{at}_2(q)$ on $M(\underline{\lambda})$ is given by
\[
\begin{array}{l}
f(a,b)Z_{11}=\begin{cases}
0,&b=0\\
\lambda_1q^a(q^b-1)f(a+1,b-1),& b>0
\end{cases}\\
f(a,b)Z_{12}=q^{a+b}\lambda_1f(a,b)\\
f(a,b)Z_{21}=\begin{cases}
f(a+1,b),& 0\leq a \leq m-2\\
\lambda_2f(0,b),& a=m-1
\end{cases}\\
f(a,b)Z_{22}=f(a,b+1)
\end{array}\]
As $q$ is a primitive $m$-th root of unity, then one gets the invariant subspaces
\[M_{p,m}:=\spa\{f(a,pm+i)~|~i\geq 0,0\leq a\leq m-1\},\ \ \ \ p=1,2,\cdots.\]
Evidently one has
\[M(\underline{\lambda})\supset M_{1,m}\supset M_{2,m} \supset M_{3,m}\supset\cdots\]
and \[Q_{1,m}\subset Q_{2,m}\subset Q_{3,m}\subset \cdots,\]
where $Q_{p,m}=M(\underline{\lambda})/M_{p,m}$ is the quotient space corresponding to $M_{p,m}$. Also observe that
\[Q_{p,m}=\spa\{\overline{f}(a,b)=f(a,b) \mo M_{p,m}~|~0\leq a\leq m-1,0\leq b\leq pm-1\}.\]
On each $Q_{p,m}$, the above action induces $pm^2$-dimensional $\mathcal{M}\text{at}_2(q)$-module structure:

\[
\begin{array}{l}
\overline{f}(a,b)Z_{11}=\begin{cases}
0,&b=0\\
\lambda_1 q^a(q^b-1)\overline{f}(a+1,b-1),& 1\leq b\leq pm-1
\end{cases}\\
\overline{f}(a,b)Z_{12}=q^{a+b}\lambda_1\overline{f}(a,b)\\
\overline{f}(a,b)Z_{21}=\begin{cases}
\overline{f}(a+1,b),& 0\leq a \leq m-2\\
\lambda_2\overline{f}(0,b),& a=m-1
\end{cases}\\
\overline{f}(a,b)Z_{22}=\begin{cases}
\overline{f}(a,b+1),&0\leq b\leq pm-2\\
0,& b=pm-1
\end{cases}
\end{array}\]
The following result provides all submodules of $Q_{p,m}$. 
\begin{lemm}\label{submod}
The $\mathcal{M}\text{at}_2(q)$-submodules of $M(\underline{\lambda})$ containing $M_{p,m}$ are of the form $M_{r,m}, 1\leq r\leq p$ or $M(\underline{\lambda})$.
\end{lemm}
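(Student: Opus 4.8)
The plan is to pass to the finite-dimensional quotient and exploit two features of the displayed action: the operator $Z_{12}$ acts diagonalizably, and the index $b$ behaves like a $\mathbb{Z}$-grading on which $Z_{22}$ raises, $Z_{11}$ lowers, while $Z_{12}$ and $Z_{21}$ preserve it. By the standard correspondence between the submodules of $M(\underline{\lambda})$ containing $M_{p,m}$ and the submodules of the quotient $Q_{p,m}=M(\underline{\lambda})/M_{p,m}$, it suffices to show that every submodule $N$ of $Q_{p,m}$ equals $\spa\{\overline{f}(a,b):rm\leq b\leq pm-1\}$ for some $0\leq r\leq p$.

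The first and main step is to prove that $N$ is spanned by the basis vectors it contains. Since $\overline{f}(a,b)Z_{12}=q^{a+b}\lambda_1\overline{f}(a,b)$ and $q$ is a primitive $m$-th root of unity, $Z_{12}$ acts on $Q_{p,m}$ diagonalizably with eigenspaces $E_c=\spa\{\overline{f}(a,b):a+b\equiv c\!\!\pmod m\}$; as $N$ is $Z_{12}$-stable, $N=\bigoplus_c(N\cap E_c)$. Inside a fixed $E_c$ there is exactly one basis vector $w_b:=\overline{f}(a,b)$ per level $b=0,\dots,pm-1$, with $a\equiv c-b$. The key observation is that the operator $R:=Z_{22}Z_{21}^{m-1}$ preserves each $E_c$, because its total shift of the residue $a+b$ is $(m-1)+1=m\equiv 0$, and satisfies $w_bR=(\text{nonzero scalar})\,w_{b+1}$ for $b<pm-1$ while $w_{pm-1}R=0$, the nonvanishing coming from $\lambda_2\neq 0$ keeping the $Z_{21}$-cycling coefficients nonzero. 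Thus $R$ restricts on $E_c$ to a single nilpotent Jordan block, whose invariant subspaces form the chain $\{\spa\{w_b:b\geq t\}\}$; since $N\cap E_c$ is $R$-invariant it must be one of these, hence spanned by the $w_b$. Summing over $c$ proves homogeneity.

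It then remains to read off which monomial subspaces are invariant. Let $B$ be the set of basis vectors in $N$, so $N=\spa B$. Because $\overline{f}(a,b)Z_{21}$ has nonzero coefficient and cycles $a$ through all residues mod $m$, $B$ is a union of full levels, i.e. $B=\{\overline{f}(a,b):b\in T\}$ for some $T\subseteq\{0,\dots,pm-1\}$. The relation $\overline{f}(a,b)Z_{22}=\overline{f}(a,b+1)$ (nonzero for $b<pm-1$) forces $T$ to be an up-set, so $T=\{t,t+1,\dots,pm-1\}$. Finally $\overline{f}(a,b)Z_{11}=\lambda_1q^a(q^b-1)\overline{f}(a+1,b-1)$ has nonzero coefficient precisely when $b\not\equiv 0\pmod m$; hence if the least element $t$ of $T$ were not a multiple of $m$ we could descend to level $t-1$, contradicting minimality. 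Therefore $t=rm$ with $0\leq r\leq p$, giving $N=\spa\{\overline{f}(a,b):rm\leq b\leq pm-1\}$. Unwinding the reduction, the submodules of $M(\underline{\lambda})$ containing $M_{p,m}$ are exactly $M_{r,m}$ for $1\leq r\leq p$ (the value $r=p$ recovering $M_{p,m}$ itself) together with $M(\underline{\lambda})$ (the case $r=0$).

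The only genuinely delicate point is the homogeneity step: one must locate an operator that stays inside a single $Z_{12}$-eigenspace yet moves through all $b$-levels without ever vanishing, and $R=Z_{22}Z_{21}^{m-1}$ does exactly this thanks to $\lambda_2\neq 0$. Once homogeneity is secured, the remaining classification is the short closure computation above, driven entirely by the locations at which the $Z_{11}$-coefficient $q^b-1$ vanishes.
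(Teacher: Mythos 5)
Your proof is correct, but it reaches the conclusion by a genuinely different route than the paper. The paper argues element-by-element: given $w\in W\setminus M_{p,m}$ of minimal level $j'$, it applies $Z_{22}^{pm-1-j'}$ to push $w$ into the top level $pm-1$ modulo $M_{p,m}$, then strips off basis components one at a time by subtracting $Z_{12}$-eigenvalue multiples (an induction on the number of nonzero coefficients) until a single vector $f(i',pm-1)$ lies in $W$; from there $Z_{21}$-cycling and $Z_{11}$-descent (whose coefficient $q^{b}-1$ vanishes exactly when $b\equiv 0 \pmod m$) give $M_{p-1,m}\subseteq W$, and the lemma follows by iterating. You instead first prove a global structural statement: every submodule $N$ of $Q_{p,m}$ is spanned by the standard basis vectors it contains. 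You obtain this from the eigenspace decomposition $N=\bigoplus_c (N\cap E_c)$ for the diagonalizable operator $Z_{12}$, combined with the nice observation that $R=Z_{22}Z_{21}^{m-1}$ preserves each $E_c$ and acts there as a single nilpotent Jordan block (a weighted shift with nonzero weights, thanks to $\lambda_2\neq 0$), so that $N\cap E_c$ must be a tail span; the remaining classification of monomial submodules via $Z_{21}$-cycling, the $Z_{22}$ up-set property, and the $Z_{11}$-descent criterion is then routine. Both arguments rest on the same four pillars ($Z_{12}$-eigenvalue separation, $Z_{22}$ raising, $Z_{21}$ cycling, $Z_{11}$ lowering with vanishing precisely at levels divisible by $m$), but your Jordan-block homogeneity step replaces the paper's push-to-top-and-strip induction: it is slightly longer, yet it buys the stronger, reusable fact that \emph{all} submodules of $Q_{p,m}$ are monomial, from which the chain structure is read off at once, whereas the paper's induction is shorter and needs no auxiliary operator. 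One cosmetic remark, which applies equally to the paper's displayed formulas: the action of $Z_{11}$ (and of $R$) involves a wraparound factor $\lambda_2$ when the index $a$ passes $m-1$; since $\lambda_2\neq 0$ this changes nothing in either argument, but it is worth noting that the ``nonzero scalar'' claims silently use it.
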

\begin{proof}
Let $W$ be an $\mathcal{M}\text{at}_2(q)$-submodule containing $M_{p,m}$. If $W=M_{p,m}$, then we are done. Otherwise one can choose $0\neq w\in W$ such that
\[w=\sum_{i=0}^{m-1}\sum_{j=0}^{pm-1}C_{ij}f(i,j),\ \ C_{ij}\in \mathbb{K},\] with at least one non zero $C_{ij}$. Let $j'=\mi\{j~|~C_{ij}\neq 0\}$. Then $wZ_{22}^{pm-1-j'}\in W$. Now
\[
\begin{array}{cl}
wZ_{22}^{pm-1-j'}&=\sum\limits_{i=0}^{m-1}\sum\limits_{j=j'}^{pm-1}C_{ij}f(i,j+pm-1-j')\\
&=w'+\sum\limits_{i=0}^{m-1}C_{i,j'}f(i,pm-1),
\end{array}\]
where $w'=\sum\limits_{i=0}^{m-1}\sum\limits_{j=j'+1}^{pm-1}C_{ij}f(i,j+pm-1-j') \in M_{p,m}$. Thus \[w''=wZ_{22}^{pm-1-j'}-w'=\sum\limits_{i=0}^{m-1}C_{i,j'}f(i,pm-1)\in W.\] Now if $C_{i_1,j'}$ and $C_{i_2,j'}$ are two non zero scalars in $w''$, then $w''Z_{12}-\lambda_1q^{i_1-pm+1}w''$ is a non zero element in $W$ smaller length than $w''$. Hence by
induction it follows that $f(i',pm-1)\in W$ for some $0\leq i'\leq m-1$. Finally with the action of $Z_{11}$ and $Z_{21}$ on $f(i',pm-1)$, we have $M_{p-1,l}\subseteq W$. Now if $W=M_{p-1,l}$, then we are done. Otherwise continuing with the above argument sequentially one can obtain the desired result.
\end{proof}
With this lemma we have the following:
\begin{theo}\label{itre}
Suppose $q$ is a primitive $m$-th root of unity. Then
\begin{enumerate}
    \item [(1)] $Q_{1,m}$ is a simple $\mathcal{M}\text{at}_2(q)$-module.
    \item [(2)] When $p>1$, then $Q_{p,m}$ is neither simple nor semisimple $\mathcal{M}\text{at}_2(q)$-module.
    \item [(3)] When $p>1$, then $Q_{p,m}$ is an indecomposable $\mathcal{M}\text{at}_2(q)$-module.
\end{enumerate}
\end{theo}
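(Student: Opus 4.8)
The plan is to determine the entire submodule lattice of $Q_{p,m}$ from Lemma \ref{submod} via the correspondence theorem, and then to read off all three assertions of Theorem \ref{itre} as purely lattice-theoretic consequences.

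First I would apply the correspondence theorem to the quotient $Q_{p,m}=M(\underline{\lambda})/M_{p,m}$: its submodules are exactly $W/M_{p,m}$ as $W$ runs over the submodules of $M(\underline{\lambda})$ that contain $M_{p,m}$. By Lemma \ref{submod} these $W$ are precisely the $M_{r,m}$ with $1\leq r\leq p$ together with $M(\underline{\lambda})$ itself. Since $M(\underline{\lambda})\supset M_{1,m}\supset M_{2,m}\supset\cdots\supset M_{p,m}$ is a strictly descending chain, the submodules of $Q_{p,m}$ form the single chain
\[
0=M_{p,m}/M_{p,m}\subset M_{p-1,m}/M_{p,m}\subset\cdots\subset M_{1,m}/M_{p,m}\subset Q_{p,m}.
\]
In other words $Q_{p,m}$ is uniserial of composition length $p$; this one observation carries the whole theorem.

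From the chain, statement $(1)$ is immediate: for $p=1$ it collapses to $0\subset Q_{1,m}$, so $Q_{1,m}$ has no nonzero proper submodule and is simple. For $(3)$ I would invoke the standard fact that a uniserial module is indecomposable: if $Q_{p,m}=A\oplus B$ with $A,B\neq 0$, then $A\cap B=0$, yet total ordering of the submodules forces one of $A,B$ to contain the other and hence to be zero, a contradiction; thus $Q_{p,m}$ is indecomposable for every $p$. For $(2)$, when $p>1$ the term $M_{p-1,m}/M_{p,m}$ is a nonzero proper submodule, so $Q_{p,m}$ is not simple; and since an indecomposable semisimple module must be simple, combining this with $(3)$ shows $Q_{p,m}$ is not semisimple.

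Because the three conclusions reduce to manipulations of an already-established submodule lattice, there is no serious obstacle left in this theorem itself; the genuine work lies in Lemma \ref{submod}. The only point demanding any care is verifying that the correspondence theorem transfers the descending chain of the $M_{r,m}$ to a \emph{totally ordered} lattice of submodules of $Q_{p,m}$ with no extra submodules appearing, which is exactly what Lemma \ref{submod} guarantees by listing all the relevant $W$.
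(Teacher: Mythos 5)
Your proof is correct, but it organizes the deduction differently from the paper, and more efficiently. Both arguments stand on Lemma \ref{submod}, and the paper's proof of (1) is exactly yours. The divergence is in (2) and (3): the paper never makes the uniserial structure explicit. For (2) it picks the concrete invariant subspace $W=M_{p-1,m}/M_{p,m}$, spanned by the $\overline{f}(a,b)$ with $(p-1)m\leq b\leq pm-1$, and rules out an invariant complement $\overline{W}$ by computation: any nonzero $x\in\overline{W}$ must have a component outside $W$, and applying a suitable power of $Z_{22}$ sends $x$ to a nonzero element of $W$, contradicting $W\cap\overline{W}=0$. Its proof of (3) then cites Lemma \ref{submod} for the list of nonzero proper submodules $M_{r,m}/M_{p,m}$ and reuses that same computation to deny each of them a complement. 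You instead pass through the correspondence theorem once: the submodule lattice of $Q_{p,m}$ is the chain $0\subset M_{p-1,m}/M_{p,m}\subset\cdots\subset M_{1,m}/M_{p,m}\subset Q_{p,m}$, hence $Q_{p,m}$ is uniserial, hence indecomposable (two nonzero direct summands would be comparable, forcing one to vanish), and hence, being non-simple for $p>1$, not semisimple. Your route buys brevity and rigor: once Lemma \ref{submod} is granted there is nothing left to compute, you prove (3) first and deduce (2) from it, and you avoid the bookkeeping in the paper's $Z_{22}$-argument (where one must check that the image of $x$ in $W$ is nonzero). The paper's route proves (2) by hand and gets (3) afterwards, which has the minor virtue of exhibiting concretely how a candidate complement fails, without invoking the full classification of submodules for part (2). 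The only point of care in your version is to note that the inclusions $M_{r,m}\supset M_{r+1,m}$ are strict (e.g.\ $f(0,rm)\in M_{r,m}\setminus M_{r+1,m}$) and that $M_{p-1,m}\neq M(\underline{\lambda})$, so the chain genuinely has length $p$ and $M_{p-1,m}/M_{p,m}$ is indeed a nonzero proper submodule.
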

\begin{proof}
Proof of (1): It follows from Lemma \ref{submod}.\\
Proof of (2): First of all $Q_{p,m}$ is semisimple if and only if for each invariant subspace $W\subset Q_{p,m}$, there is an invariant subspace $\overline{W}$ complementary to it. Now let us consider the subspace
\[W=\spa\{\overline{f}(a,b)~|~0\leq a\leq m-1,(p-1)m\leq b\leq pm-1\}.\]
It is easily seen that $W$ is $\mathcal{M}\text{at}_2(q)$-invariant. One can prove that it does not have an invariant complementary subspace $\overline{W}$ in $Q_{p,m}$. Otherwise a non zero element $x\in \overline{W}$ can be written as
\[x=\sum\limits_{i=0}^{m-1}\sum\limits_{j=0}^{(p-1)m-1}C_{ij}\overline{f}(i,j)+\sum\limits_{i=0}^{m-1}\sum\limits_{j=(p-1)m}^{pm-1}D_{ij}\overline{f}(i,j),\] where there is at least one non zero $C_{ij}$. Let $j'=\mi\{j~|~C_{ij}\neq 0\}$. Then we have $xZ_{22}^{(p-1)m-j'}\in W$. This contradicts the assumption that $W$ is $\mathcal{M}\text{at}_2(q)$-invariant.\\
Proof of (3): Recall that a module is indecomposable if it is non zero and cannot be written as a direct sum of two non zero submodules. By Lemma \ref{submod}, the non zero proper $\mathcal{M}\text{at}_2(q)$-invariant subspaces of $Q_{p,m}$ are of the form $M_{r,m}/M_{p,m},1\leq r\leq p-1$. Now with the argument in the proof of (2), one can easily verify that there is no invariant subspace in $Q_{p,m}$ complementary to any of the non zero proper $\mathcal{M}\text{at}_2(q)$-invariant subspaces. Thus when $p>1$, the module $Q_{p,m}$ is a finite dimensional indecomposable $\mathcal{M}\text{at}_2(q)$-module. 
\end{proof}
\begin{rema}
The simple $\mathcal{M}\text{at}_2(q)$-module $(Q_{1,m},\lambda_1,\lambda_2)$ is isomorphic to the simple module $(\mathcal{N}_2,\xi,0,\eta)$ where $\xi=\lambda_2,\eta^m=\lambda^m_1\lambda_2$. The indecomposable $\mathcal{M}\text{at}_2(q)$-modules $(Q_{p,m},\lambda_1,\lambda_2)$ and $(Q_{p,m},\lambda'_1,\lambda'_2)$ with $p>1$ are isomorphic if and only if $\lambda_1=q^{a+b}\lambda'_1$ and $\lambda_2=\lambda'_2$ for $0\leq a,b\leq m-1$. Also if $p_1\neq p_2$, then the indecomposable modules $Q_{p_1,m}$ and $Q_{p_1,m}$ are non-isomorphic. 
\end{rema}
\section{\bf{Reflection Equation Algebra $A_q(M_2)$}}\label{reas}
Recall the reflection equation algebra $A_q(M_2)$ of rank $2$. Assume that $q$ be a primitive $m$-th root of unity.
In this section we recall some facts for $A_q(M_2)$ that shall be applying to study simple module. Firstly the following result provides important ring theoretic properties for $A_q(M_2)$:
\begin{prop}\emph{(\cite[Proposition 3.1]{dl})}\label{imp1}
The algebra $A_q(M_2)$ is an affine noetherian domain and the monomials of the form $u_{11}^au_{12}^bu_{21}^cu_{22}^d$ constitute a $\mathbb{K}$-basis of $A_q(M_2)$.
\end{prop}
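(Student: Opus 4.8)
The plan is to realise $A_q(M_2)$ as an iterated skew polynomial (Ore) extension over $\mathbb{K}$, exactly as was done for $\mathcal{M}\text{at}_2(q)$ in Proposition \ref{imp}, and then to read off all the assertions from the standard theory of such extensions. The key point is that the generators must be ordered so that each successive relation is genuinely of Ore type. I would order them as $u_{11},u_{22},u_{12},u_{21}$ and aim to exhibit a presentation
\[
A_q(M_2)=\mathbb{K}[u_{11}][u_{22}][u_{12};\sigma_{12}][u_{21};\sigma_{21},\delta_{21}],
\]
in which $\mathbb{K}[u_{11}][u_{22}]=\mathbb{K}[u_{11},u_{22}]$ is an ordinary commutative polynomial ring, since $u_{11}$ and $u_{22}$ commute. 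Note that the basis order $u_{11},u_{12},u_{21},u_{22}$ cannot itself be used for the Ore construction, because the relation $u_{11}u_{12}=u_{12}(u_{11}+(q^{-2}-1)u_{22})$ drags in the higher variable $u_{22}$; this is why $u_{22}$ is placed in the base.

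For the first extension, the relations $u_{22}u_{12}=q^2u_{12}u_{22}$ and $u_{11}u_{12}=u_{12}(u_{11}+(q^{-2}-1)u_{22})$ rearrange to $u_{12}u_{22}=q^{-2}u_{22}u_{12}$ and $u_{12}u_{11}=(u_{11}-(q^{-4}-q^{-2})u_{22})u_{12}$, so $u_{12}$ is adjoined through the $\mathbb{K}$-algebra automorphism $\sigma_{12}$ of $\mathbb{K}[u_{11},u_{22}]$ given by the triangular linear substitution $u_{22}\mapsto q^{-2}u_{22}$, $u_{11}\mapsto u_{11}-(q^{-4}-q^{-2})u_{22}$, with zero derivation. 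For the second extension, the three remaining relations yield $u_{21}u_{22}=q^2u_{22}u_{21}$, $u_{21}u_{11}=(u_{11}+(q^{-2}-1)u_{22})u_{21}$ and $u_{21}u_{12}=u_{12}u_{21}+(q^{-2}-1)u_{22}(u_{22}-u_{11})$; these present $u_{21}$ via the automorphism $\sigma_{21}$ (again triangular and linear on $u_{11},u_{22}$, and fixing $u_{12}$) together with the $\sigma_{21}$-derivation $\delta_{21}$ determined by $\delta_{21}(u_{11})=\delta_{21}(u_{22})=0$ and $\delta_{21}(u_{12})=(q^{-2}-1)(u_{22}^2-u_{22}u_{11})$.

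Once this presentation is established, every assertion follows from standard facts. The algebra is affine because it is finitely generated; it is noetherian by the skew version of the Hilbert Basis Theorem (\cite[Theorem 2.9]{mcr}) applied at each step over the noetherian base $\mathbb{K}[u_{11},u_{22}]$; and it is a domain because each $\sigma$ is an automorphism, hence injective, so adjoining $u_{12}$ and then $u_{21}$ preserves the domain property. Finally, an iterated Ore extension is a free module over its base on the ordered monomials in the adjoined variables, which gives the basis $\{u_{11}^a u_{22}^d u_{12}^b u_{21}^c\}$; since $u_{22}$ $q$-commutes with both $u_{12}$ and $u_{21}$, reordering $u_{22}^d$ to the right only introduces a nonzero scalar, so the monomials $\{u_{11}^a u_{12}^b u_{21}^c u_{22}^d\}$ of the statement form a $\mathbb{K}$-basis as well.

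The main obstacle is certifying that the data $(\sigma_{12},\sigma_{21},\delta_{21})$ genuinely assemble into an iterated Ore extension, rather than merely recording the ``leading'' commutation relations. I would verify that $\sigma_{12}$ and $\sigma_{21}$ are bijective (immediate from their triangular linear form), that $\sigma_{21}$ extends consistently past the $u_{12}$-layer, which amounts to checking $\sigma_{12}\sigma_{21}=\sigma_{21}\sigma_{12}$ on $\mathbb{K}[u_{11},u_{22}]$, and, most delicately, that $\delta_{21}$ is a well-defined $\sigma_{21}$-derivation compatible with the already-imposed relation $u_{12}u_{11}=\sigma_{12}(u_{11})u_{12}$. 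The single genuinely non-diagonal relation $u_{21}u_{12}-u_{12}u_{21}=(q^{-2}-1)u_{22}(u_{22}-u_{11})$ is where the derivation lives, so the bulk of the computation concentrates on confirming the twisted Leibniz rule there; this step is routine but is precisely what rules out hidden relations and thereby guarantees the linear independence of the claimed monomial basis.
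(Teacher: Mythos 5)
Your proposal is correct, but it runs in the opposite logical direction from the paper. The paper does not prove this proposition at all: it imports it from Domokos--Lenagan \cite[Proposition 3.1]{dl}, whose result covers $A_q(M_n)$ for all $n$, and only afterwards uses the imported basis to justify the iterated skew polynomial presentation $\mathbb{K}[u_{11},u_{22}][u_{12},\sigma][u_{21},\tau,\delta]$ displayed in Section \ref{reas}. You instead build that presentation from scratch and then harvest affine, noetherian, domain, and the PBW basis from standard Ore theory; note that your $\sigma_{12},\sigma_{21},\delta_{21}$ coincide exactly with the paper's $\sigma,\tau,\delta$, since $u_{11}-(q^{-4}-q^{-2})u_{22}=u_{11}+q^{-2}(1-q^{-2})u_{22}$. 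Your consistency checks are the right ones and they do go through: both substitutions are invertible (triangular, nonzero diagonal), one has $\sigma_{12}\sigma_{21}=\sigma_{21}\sigma_{12}$ on $\mathbb{K}[u_{11},u_{22}]$ (in fact $\sigma_{12}\sigma_{21}=\mathrm{id}$ there), and the twisted Leibniz compatibility of $\delta_{21}$ with $u_{12}u_{11}=\sigma_{12}(u_{11})u_{12}$ and $u_{12}u_{22}=q^{-2}u_{22}u_{12}$ holds precisely because $\sigma_{21}(\sigma_{12}(u_{11}))=u_{11}$ and $u_{11},u_{22}$ commute. The one step worth spelling out fully is the standard two-part identification: the ordered monomials span the presented algebra by straightening with the defining relations, and the canonical surjection from the presented algebra onto your Ore extension carries them to a basis, so they are independent and the surjection is an isomorphism; your closing remark about ``ruling out hidden relations'' is this argument in compressed form, and the final reordering of $u_{22}^d$ past $u_{12}^bu_{21}^c$ indeed costs only the nonzero scalar $q^{2d(b-c)}$. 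What your route buys is a self-contained, elementary proof for rank $2$ that frees the paper from the external citation; what the paper's citation buys is the general-$n$ statement, which your method cannot reach, since $A_q(M_n)$ for $n>2$ admits no evident iterated Ore presentation and \cite{dl} obtains these properties by relating $A_q(M_n)$ to the FRT quantization instead.
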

With this $\mathbb{K}$-basis one can show that $A_q(M_2)$ has an iterated skew polynomial presentation of the form:
\[\mathbb{K}[u_{11},u_{22}][u_{12},\sigma][u_{21},\tau,\delta]\] where the $\sigma$ and $\tau$ are $\mathbb{K}$-linear automorphisms and the $\delta$ is $\mathbb{K}$-linear $\tau$-derivation such that
\[\sigma(u_{11})=u_{11}+q^{-2}(1-q^{-2})u_{22},\ \sigma(u_{22})=q^{-2}u_{22}\]
\[\tau(u_{11})=u_{11}+(q^{-2}-1)u_{22},\ \tau(u_{22})=q^{2}u_{22},\ \tau(u_{12})=u_{12}\]
\[\delta(u_{11})=\delta(u_{22})=0,\ \delta(u_{12})=(1-q^{-2})(u_{11}-u_{22})u_{22}.\]
Let us consider the two elements \[\de_{q}:=u_{11}u_{22}-q^2u_{12}u_{21}\ \text{and}\ \tr_{q}:=u_{11}+q^{-2}u_{22}\] of the algebra $A_q(M_2)$ commonly known as quantum determinant and quantum trace respectively. Using defining relations of the algebra $A_q(M_2)$, one can easily verify the following:
\begin{itemize}
    \item[(i)] The elements $\de_q$ and $\tr_q$ are central in $A_q(M_2)$ (cf. \cite{cgj}).
    \item[(ii)] The element $u_{22}$ is a normal element of $A_q(M_2)$.
    \item[(iii)] The elements $u_{11},u_{22}$ and $u_{12}u_{21}$ commuting monomials in $A_q(M_2)$.
\end{itemize}
\begin{lemm}
\emph{(\cite[Lemma 3.2]{cgj})}\label{li1}
For $r\geq 1$, the following identities hold in the algebra $A_q(M_2)$:
\begin{enumerate}
\item[(i)] $u_{12}^ru_{11}=u_{11}u_{12}^r+q^{-2}(q^{2r}-1)u_{12}^ru_{22}$
    \item [(ii)] $u_{21}^ru_{11}=u_{11}u_{21}^r+q^{-2}(1-q^{2r})u_{22}u_{21}^r$
    \item [(iii)] $u_{21}^ru_{12}=u_{12}u_{21}^r+q^{-2}(q^{2r}-1)u_{11}u_{22}u_{21}^{r-1}+(1-q^{2r})q^{-2}u_{22}^2u_{21}^{r-1}$
    \item [(iv)] $u_{21}u^r_{12}=u^r_{12}u_{21}+(1-q^{-2r})u_{11}u_{22}u^{r-1}_{12}+q^{-4}(1-q^{4r}+q^{4r-2}-q^{2r-2})u_{12}^{r-1}u_{22}^2$
\end{enumerate}
\end{lemm}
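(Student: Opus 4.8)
The plan is to prove all four identities by induction on $r$, deriving the base case $r=1$ directly from the defining relations of $A_q(M_2)$ and then propagating upward. Before starting, I would record the two elementary power-commutation relations that get used repeatedly to move $u_{22}$ through powers of the off-diagonal generators: from $u_{22}u_{12}=q^2u_{12}u_{22}$ and $u_{21}u_{22}=q^2u_{22}u_{21}$ one obtains, by a trivial induction, $u_{22}u_{12}^r=q^{2r}u_{12}^ru_{22}$ and $u_{22}u_{21}^r=q^{-2r}u_{21}^ru_{22}$; together with $u_{11}u_{22}=u_{22}u_{11}$ these let me normalize any monomial in $u_{22}$ and one other generator. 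The base cases are immediate: for instance, rewriting $u_{11}u_{12}=u_{12}u_{11}+(q^{-2}-1)u_{12}u_{22}$ gives $u_{12}u_{11}=u_{11}u_{12}+(1-q^{-2})u_{12}u_{22}$, and since $q^{-2}(q^2-1)=1-q^{-2}$ this is exactly (i) at $r=1$; identities (ii), (iii) and (iv) at $r=1$ likewise reduce to the listed defining relations after substituting $u_{11}u_{22}=u_{22}u_{11}$ and simplifying the scalar factors.

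For the inductive step of (i), I would write $u_{12}^{r+1}u_{11}=u_{12}\cdot(u_{12}^ru_{11})$, apply the induction hypothesis, push $u_{11}$ through the leading $u_{12}$ with the $r=1$ relation, and move the resulting $u_{22}$ to the right using $u_{22}u_{12}^r=q^{2r}u_{12}^ru_{22}$; collecting the two contributions yields coefficient $(1-q^{-2})q^{2r}+q^{-2}(q^{2r}-1)=q^{2r}-q^{-2}=q^{-2}(q^{2(r+1)}-1)$, as required, and (ii) is handled symmetrically. For (iii) and (iv) I would peel off a single factor of $u_{21}$ (resp.\ $u_{12}$), insert the induction hypothesis, and then straighten the resulting words: the newly created occurrence of $u_{21}u_{12}$ (resp.\ $u_{12}u_{21}$) is replaced by its normal form via the $r=1$ case of (iii)/(iv), the factors of the shape $u_{12}^ru_{11}$ are rewritten using the already-proved identity (i) and those of shape $u_{21}^ru_{11}$ using (ii), and every surviving $u_{22}$ is pushed to its final position with the power-commutation relations. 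This is precisely why (i) and (ii) must be established before (iii) and (iv).

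The routine but delicate part --- and the main obstacle --- is the scalar bookkeeping in the inductive steps of (iii) and especially (iv), where several $q$-power terms must be combined so that the telescoping coefficient $q^{-4}(1-q^{4(r+1)}+q^{4(r+1)-2}-q^{2(r+1)-2})$ emerges. I expect this to require grouping contributions by monomial type (such as $u_{11}u_{22}u_{12}^{r}$ and $u_{12}^{r}u_{22}^2$) and checking each coefficient identity separately. Since every manipulation uses only the defining relations and the two power-commutation rules, no genuinely new idea is needed beyond careful algebra, and the induction closes.
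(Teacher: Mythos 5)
Your overall strategy---induction on $r$, the power-commutation rules $u_{22}u_{12}^r=q^{2r}u_{12}^ru_{22}$ and $u_{22}u_{21}^r=q^{-2r}u_{21}^ru_{22}$, base cases read off from the defining relations, and establishing (i)--(ii) before (iii)--(iv)---is exactly the route the paper indicates (its entire proof is the one-line remark that the equalities follow by induction on $r$). Your base cases are correct, your inductive steps for (i) and (ii) are correct, and the coefficient in (iv) does propagate as you claim: with $D_r:=q^{-4}(1-q^{4r}+q^{4r-2}-q^{2r-2})$ one checks $D_{r+1}=q^4D_r+(1-q^{-2})q^{-2}(q^{2r}-1)+(q^{-2}-1)$, so that part of the induction closes.

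The genuine gap sits precisely in the step you deferred as ``routine but delicate scalar bookkeeping'': for identity (iii) the bookkeeping does \emph{not} work out, because (iii) as stated is inconsistent with the paper's defining relations. Carrying out your own plan, write $u_{21}^ru_{12}=u_{12}u_{21}^r+A_ru_{11}u_{22}u_{21}^{r-1}+B_ru_{22}^2u_{21}^{r-1}$, peel off one $u_{21}$ on the left, and straighten; this yields the recursions
\[
A_{r+1}=q^2A_r+(1-q^{-2}),\qquad B_{r+1}=q^4B_r+(1-q^2)A_r+(q^{-2}-1),
\]
with $A_1=1-q^{-2}$, $B_1=q^{-2}-1$. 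The first is solved by $A_r=q^{-2}(q^{2r}-1)$, matching the stated middle coefficient. But the second is solved by $B_r=q^{2r-4}(1-q^{2r})=q^{-4}(q^{2r}-q^{4r})$, not by the stated $q^{-2}(1-q^{2r})$; already at $r=2$ a direct computation from the defining relations gives
\[
u_{21}^2u_{12}=u_{12}u_{21}^2+(q^2-q^{-2})u_{11}u_{22}u_{21}+(1-q^4)u_{22}^2u_{21},
\]
whose last coefficient is $1-q^4$, while (iii) claims $q^{-2}(1-q^4)$; these agree only when $q^4=1$. So no induction (and no other argument) can close for (iii) as printed: the statement carries a typo, presumably introduced in transcribing \cite{cgj}. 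Tellingly, the paper itself uses the corrected value later: in Section 6 the action formula $(vu_{21}^r)u_{12}=c_rvu_{21}^{r-1}$ has $c_r=\lambda_3+q^{-2}(q^{2r}-1)\lambda_1\lambda_2+(q^{2r}-q^{4r})q^{-4}\lambda_2^2$, whose last coefficient is exactly $B_r=q^{-4}(q^{2r}-q^{4r})$. Had you actually performed the coefficient check instead of asserting that ``the induction closes,'' you would have discovered this; as written, your proposal claims to verify an identity that is false, and the inductive step for (iii) is precisely where it breaks down.
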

The equalities can be proved by induction on $r$.
\par Let $q$ be a primitive $m$-th root of unity. Set $n$ such that 
\[n:=\begin{cases}
m,& \text{when} \ \  m \  \text{is odd}\\
\frac{m}{2},& \text{when} \ \  m \  \text{is even.}
\end{cases}\]
The identities listed in Lemma \ref{li1} along with the defining relations of $A_q(M_2)$ yield the following.  
\begin{coro}\emph{(\cite[Lemma 3.2]{cgj})}\label{li1}
If $q$ is a primitive $m$-th root of unity, then $u_{12}^n,u_{21}^n$ and $u_{22}^n$ are central elements in $A_q(M_2)$. We note that the element $u_{11}^n$ in $A_q(M_2)$ is not central.
\end{coro}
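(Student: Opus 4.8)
The plan is to reduce everything to commutation checks against the four generators, after first recording the arithmetic behind the definition of $n$: by construction $n$ is precisely the multiplicative order of $q^2$, so $q^{2n}=1$ (and $q^{2j}\neq1$ for $0<j<n$). Every claim then amounts to showing that the relevant power commutes, or fails to commute, with each of $u_{11},u_{12},u_{21},u_{22}$.

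For $u_{22}^n$ I would use only the defining relations: $u_{22}$ commutes with $u_{11}$, while $u_{22}u_{12}=q^2u_{12}u_{22}$ and $u_{21}u_{22}=q^2u_{22}u_{21}$ give $u_{22}^nu_{12}=q^{2n}u_{12}u_{22}^n=u_{12}u_{22}^n$ and $u_{22}^nu_{21}=u_{21}u_{22}^n$ once $q^{2n}=1$. For $u_{12}^n$ and $u_{21}^n$ the commutations with $u_{22}$ and with the generator itself are again immediate from the $q^2$-relations; the four remaining commutations are exactly the identities (i)--(iv) of the preceding lemma taken at $r=n$: part (i) handles $u_{12}^nu_{11}$, part (iv) handles $u_{21}u_{12}^n$, part (ii) handles $u_{21}^nu_{11}$, and part (iii) handles $u_{21}^nu_{12}$. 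In each case the correction terms carry a scalar factor that vanishes at $q^{2n}=1$ --- a multiple of $q^{2n}-1$ in (i), (ii), (iii), and in (iv) the two factors $1-q^{-2n}$ and $1-q^{4n}+q^{4n-2}-q^{2n-2}$ (the latter collapsing to $1-1+q^{-2}-q^{-2}=0$). Hence all three powers are central.

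For the last assertion I would exhibit a generator that $u_{11}^n$ does not commute with, namely $u_{12}$. A short induction on $r$, starting from $u_{11}u_{12}=u_{12}(u_{11}+(q^{-2}-1)u_{22})$, gives the closed form
\[u_{11}^r u_{12}=u_{12}\,w^r,\qquad w:=u_{11}+(q^{-2}-1)u_{22}.\]
Since $u_{11}$ and $u_{22}$ commute I may expand $w^n$ by the binomial theorem, so that
\[u_{11}^n u_{12}-u_{12}u_{11}^n=u_{12}\bigl(w^n-u_{11}^n\bigr)=u_{12}\sum_{k=1}^{n}\binom{n}{k}(q^{-2}-1)^k\,u_{11}^{n-k}u_{22}^k.\]
Provided $q^2\neq1$ (that is, $n\geq2$) the $k=n$ term has nonzero coefficient $(q^{-2}-1)^n$, so $w^n-u_{11}^n$ is a nonzero element by the $\mathbb{K}$-basis of Proposition \ref{imp1}; as $A_q(M_2)$ is a domain and $u_{12}\neq0$, the whole expression is nonzero and $u_{11}^n$ is not central.

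I expect the genuine work to be confined to the last paragraph: establishing the clean product formula $u_{11}^r u_{12}=u_{12}w^r$ and then concluding nonvanishing from the domain/PBW structure rather than from a cancellation-prone direct computation. The centrality verifications are routine once the identities (i)--(iv) are in hand, the only delicate point being the arithmetic check that the four-term coefficient in (iv) truly collapses to $0$ when $q^{2n}=1$.
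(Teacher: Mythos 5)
Your proposal is correct and follows exactly the route the paper indicates: the paper's entire justification is that the identities of Lemma \ref{li1} at $r=n$, together with the defining relations and $q^{2n}=1$, kill all the correction terms, which is precisely your centrality verification (including the collapse of the four-term coefficient in (iv)). The one place you go beyond the paper is the non-centrality of $u_{11}^n$, which the paper merely asserts with a citation to \cite{cgj}; your argument via $u_{11}^r u_{12}=u_{12}\bigl(u_{11}+(q^{-2}-1)u_{22}\bigr)^r$, the PBW basis of Proposition \ref{imp1}, and the domain property is a clean and complete justification, and your proviso $q^2\neq 1$ (i.e.\ $m>2$) correctly flags an edge case ($m=1,2$, where the algebra is commutative) that the paper's statement silently ignores.
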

\begin{prop} \label{finite1}
The algebra $A_q(M_2)$ is a PI algebra if and only if $q$ is a root of unity.
\end{prop}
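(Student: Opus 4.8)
*The algebra $A_q(M_2)$ is a PI algebra if and only if $q$ is a root of unity.*

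The statement is the exact analogue for $A_q(M_2)$ of Proposition \ref{finite}, which was proved for $\mathcal{M}\text{at}_2(q)$, so my plan is to mirror that proof step by step. For the forward direction, assume $q$ is a primitive $m$-th root of unity and let $n$ be as defined just above (so $n=m$ or $n=m/2$ according to the parity of $m$). The plan is to let $C$ be the $\mathbb{K}$-subalgebra of $A_q(M_2)$ generated by the central elements $\de_q$, $\tr_q$, and the powers $u_{12}^n, u_{21}^n, u_{22}^n$, which are central by the items (i)--(iii) preceding Lemma \ref{li1} and by Corollary \ref{li1}. Since these elements are central, $C$ is a commutative subalgebra of $A_q(M_2)$. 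I would then show that $A_q(M_2)$ is a finitely generated module over $C$, after which Proposition \ref{f} immediately gives that $A_q(M_2)$ is PI.

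The main obstacle --- and the only genuine point of care --- is the finite-module claim, because, as Corollary \ref{li1} explicitly warns, $u_{11}^n$ is \emph{not} central. This is the essential difference from the $\mathcal{M}\text{at}_2(q)$ case, where all four $Z_{ij}^m$ were central and the finite spanning set was read off trivially from the PBW basis. Here I cannot simply take $\{u_{11}^au_{12}^bu_{21}^cu_{22}^d : 0\le a,b,c,d\le n-1\}$ over a subalgebra generated by the four $n$-th powers, since $u_{11}^n\notin C$. The fix is to exploit centrality of $\de_q=u_{11}u_{22}-q^2u_{12}u_{21}$: modulo $C$ one has $u_{11}u_{22}\equiv q^2u_{12}u_{21}$ up to the central element $\de_q$, so high powers of $u_{11}$ can be traded against the (central) powers of $u_{22}, u_{12}, u_{21}$. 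Concretely, I would argue that the monomials $u_{11}^au_{12}^bu_{21}^cu_{22}^d$ with $0\le a,b,c,d\le N-1$, for a suitable finite bound $N$ depending on $m$, span $A_q(M_2)$ as a $C$-module: using the relation $u_{11}u_{22}=\de_q+q^2u_{12}u_{21}$ together with the commutation identities of Lemma \ref{li1}, any occurrence of $u_{11}^n$ can be rewritten, modulo lower-order terms and the central generators, in a form in which the $u_{11}$-degree is reduced, so that the $u_{11}$-exponent can be bounded. Once a finite spanning set over $C$ is exhibited, finite generation follows and Proposition \ref{f} applies.

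For the converse, I would argue by contraposition exactly as in Proposition \ref{finite}: if $q$ is not a root of unity, it suffices to locate a non-PI subalgebra. The relation $u_{22}u_{21}=q^2u_{21}u_{22}$ shows that the $\mathbb{K}$-subalgebra of $A_q(M_2)$ generated by $u_{21}$ and $u_{22}$ is a quantum plane with parameter $q^2$; when $q$ (hence $q^2$) is not a root of unity this quantum plane is not PI by \cite[Proposition I.14.2]{brg}. Since a subalgebra of a PI algebra is again PI, $A_q(M_2)$ cannot be PI, completing the proof.
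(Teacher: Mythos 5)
Your overall skeleton is the same as the paper's: a central subalgebra containing $\tr_q$ and the powers $u_{12}^n,u_{21}^n,u_{22}^n$, finite generation plus Proposition \ref{f} for the forward direction, and a quantum-plane subalgebra for the converse (you use $u_{21},u_{22}$ where the paper uses $u_{12},u_{22}$; both work, and the inversion in your quoted relation is immaterial). However, the step you yourself single out as the crux --- finite generation of $A_q(M_2)$ over $C$ --- is argued by a mechanism that fails. You propose to bound the $u_{11}$-exponent using centrality of $\de_q$, i.e.\ trading $u_{11}u_{22}$ for $\de_q+q^2u_{12}u_{21}$. But this reduction consumes one factor of $u_{22}$ for every factor of $u_{11}$ it removes, and it is simply inapplicable to a PBW monomial such as $u_{11}^N$ (or $u_{11}^Nu_{12}^bu_{21}^c$) containing no $u_{22}$: neither the determinant relation nor the commutation identities of Lemma \ref{li1} can create an occurrence of $u_{22}$ adjacent to $u_{11}$ out of nothing, so such monomials are untouched by your rewriting and their $u_{11}$-degree is never reduced. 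The obstruction is essential rather than cosmetic: in the quotient $A_q(M_2)/\langle u_{12},u_{21},u_{22}\rangle\cong\mathbb{K}[u_{11}]$ the elements $\de_q,u_{12}^n,u_{21}^n,u_{22}^n$ all map to $0$ while $\tr_q$ maps to $u_{11}$, so no argument in which $\tr_q$ sits passively in $C$ and only $\de_q$ does the rewriting can ever bound the $u_{11}$-degree.

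The element that rescues the proof is one you already put into $C$ but never actually used: the trace. Since $\tr_q=u_{11}+q^{-2}u_{22}$ is central, substitute $u_{11}=\tr_q-q^{-2}u_{22}$ to eliminate $u_{11}$ entirely; as $\tr_q$ and $u_{22}$ commute, $u_{11}^a=(\tr_q-q^{-2}u_{22})^a$ expands into $\mathbb{K}$-combinations of $\tr_q^iu_{22}^{a-i}$, so by Proposition \ref{imp1} every element of $A_q(M_2)$ is a $\mathbb{K}$-linear combination of monomials $\tr_q^{*}u_{22}^{*}u_{12}^{*}u_{21}^{*}$. Reducing the exponents of $u_{12},u_{21},u_{22}$ modulo $n$ against the central $n$-th powers then exhibits the finite spanning set $\{u_{12}^bu_{21}^cu_{22}^d\mid 0\le b,c,d\le n-1\}$ over $C$, and Proposition \ref{f} applies. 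This is exactly the paper's argument; note that $\de_q$ is not needed at all, and the asymmetric role of $u_{11}$ (whose $n$-th power is not central, per Corollary \ref{li1}) is handled by removing $u_{11}$ rather than by bounding its exponent.
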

\begin{proof}
Suppose $q$ be a primitive $m$-th root of unity. Let $Z$ be the subalgebra of $A_q(M_2)$ generated by $u_{12}^n,u_{21}^n,u_{22}^n$ and $\tr_q$. Then $Z$ is a central subalgebra. Now by Proposition \ref{imp1} and relation $u_{11}=\tr_q-q^{-2}u_{22}$, each element of $A_q(M_2)$ can be expressed as $\mathbb{K}$ linear combination of $\tr^*_{q}u^*_{22}u^*_{12}u^*_{21}$ . Then one can easily verify that $A_q(M_2)$ is a finitely generated module over this central subalgebra $Z$ with basis \[\{u_{12}^bu_{21}^cu_{22}^d~|~0\leq b,c,d\leq n-1\}.\] Hence it follows from Proposition \ref{f} that $A_q(M_2)$ is PI algebra.
\par For the converse, just note that the $\mathbb{K}$-subalgebra of $A_q(M_2)$ generated by $u_{22}$ and $u_{12}$ with relation $u_{22}u_{12}=q^2u_{12}u_{22}$ is not PI if $q$ is not a root of unity (cf. \cite[Proposition I.14.2.]{brg}).
\end{proof}
The algebra $A_q(M_2)$ being a prime affine PI algebra over an algebraically closed field $\mathbb{K}$, it follows from Proposition \ref{sim} that the $\mathbb{K}$-dimension of each simple $A_q(M_2)$-module is finite and bounded above by the $\pideg(A_q(M_2))$. Infact this upper bound is attained (cf. \cite[Lemma III.1.2]{brg}). Now observe that the existing $\mathbb{K}$-automorphisms in the iterated skew polynomial presentation of $A_q(M_2)$ is much more complicated than quantized matrix algebra $\mathcal{M}\text{at}_n(q)$. So here we can not apply the key technique as in subsection \ref{pisub} to compute the PI degree of $A_q(M_2)$. In the forthcoming section, we shall focus on the structure of simple $A_q(M_2)$-modules. This will help us to obtain the invariant $\pideg(A_q(M_2))$ explicitly.
\section{\bf{Simple Modules over $A_q(M_2)$}}
Let $q$ be a primitive $m$-th root of unity and $N$ be a simple module over $A_q(M_2)$. Then $N$ is finite dimensional $\mathbb{K}$-space. Since the element $u_{22}$ is normal in $ A_q(M_2)$, the action of $u_{22}$ on $N$ is either trivial or invertible.
\subsection{{Simple $u_{22}$-torsion $A_q(M_2)$-modules}}\label{sub2}
Suppose the action of $u_{22}$ on $N$ is trivial. Then $N$ becomes a simple module over the factor algebra $A_q(M_2)/\langle u_{22}\rangle$ which is isomorphic to a commutative polynomial algebra $\mathbb{K}[x_1,x_2,x_3]$ under the correspondence $\overline{u}_{11}\mapsto x_1, \overline{u}_{12}\mapsto x_2, \ \overline{u}_{21}\mapsto x_3$. In this case the possible $\mathbb{K}$-dimension of $N$ is $1$ only.  
\subsection{Simple $u_{22}$-torsionfree $A_q(M_2)$-modules}\label{sub3} Suppose the action of $u_{22}$ on $N$ is invertible. Since each of the monomials \begin{equation}\label{cop}
    u^n_{12},u^n_{21},u_{11},u_{22},u_{12}u_{21}
\end{equation} of $A_q(M_2)$ commutes, there is a common eigenvector $v$ in $N$ corresponding to the operators (\ref{cop}). Put \[vu^n_{12}=\alpha v,~vu^n_{21}=\beta v,~vu_{11}=\lambda_1v,~vu_{22}=\lambda_2v,~vu_{12}u_{21}=\lambda_3v,\] for some $\alpha,\beta,\lambda_1,\lambda_3\in \mathbb{K}$ and $\lambda_2\in \mathbb{K}^*$.
By Schur's lemma, the central elements $u_{12}^n$ and $u_{21}^n$ act as multiplication by scalar on $N$. In the following we shall determine the structure of simple $A_q(M_2)$-module ${N}$ according to the scalars:\\
\textbf{Case I:} Let us assume $\beta\neq 0$. Then $vu^n_{21}\neq 0$ and so the vectors $vu_{21}^r$ where $0 \leq r \leq n-1$ of $N$ are non-zero. Let $N_1$ be the vector subspace of $N$ spanned by these non-zero vectors. One can prove that that $N_1$ is $A_q(M_2)$-invariant subspace of $N$. In fact after some straightforward calculation using the defining relations of $A_q(M_2)$ and the identities in Lemma \ref{li1}, we get
\[\begin{array}{l}
        (vu_{21}^r)u_{11}=(\lambda_1+q^{-2}(1-q^{2r})\lambda_2)vu_{21}^r\\
       (vu_{21}^r)u_{12}=\begin{cases}c_rvu_{21}^{r-1},&1\leq r\leq n-1\\
       \beta^{-1}\lambda_3vu_{21}^{n-1},&r=0
       \end{cases}\\
       (vu_{21}^r)u_{21}=\begin{cases}
       vu_{21}^{r+1},&0\leq r\leq n-2\\
       \beta v,& r=n-1
       \end{cases}\\
       (vu_{21}^r)u_{22}=q^{2r}\lambda_2vu_{21}^r
       \end{array}\] where \[c_r=\lambda_3+q^{-2}(q^{2r}-1)\lambda_1\lambda_2+(q^{2r}-q^{4r})q^{-4}\lambda_2^2,\ \ 1\leq r\leq n-1.\]
Since $N$ is simple $A_q(M_2)$-module, we have $N=N_1$. Note that the vectors $\{vu_{21}^r|0\leq r\leq n-1\}$ are linearly independent because these vectors are eigen vectors of the operator $u_{22}$ with distinct eigen values. Thus ${N}_1$ is a simple $A_q(M_2)$-module of dimension $n$.\\ 
\textbf{Case II:} Let us consider $\alpha \neq 0$ and $\beta=0$. Then the vector subspace $N_2$ of $N$ spanned by the non zero vectors $vu^r_{12}$ with $0\leq r\leq n-1$ is $A_q(M_2)$-invariant subspace. This verification is similar to the Case I. Also in this case $N_2$ is a simple $A_q(M_2)$-module of dimension $n$.\\
\textbf{Case III:} Let us consider $\alpha=\beta=0$. Then the operators $u_{12}$ and $u_{21}$ are nilpotent and hence $\ke (u_{12}):=\{x\in N~|~xu_{12}=0\}$ and $\ke(u_{21}):=\{y\in N~|~yu_{21}=0\}$ are non-zero subspaces of $N$. Therefore there is a common eigenvector $w$ in $\ke(u_{12})$ corresponding to the commuting operators (\ref{cop}). Take \[wu_{12}=0,~wu^n_{21}=0,~wu_{11}=\lambda'_1w,~wu_{22}=\lambda'_2w,~wu_{12}u_{21}=0,\] for some $\lambda'_1,\lambda'_2\in \mathbb{K}$. Clearly $\lambda'_2\neq 0$, because the action of $u_{22}$ on $N$ is invertible. Next consider the sequence of vectors of the operator $u_{21}$ as below
\[w,wu_{21},wu_{21}^2,\cdots,wu_{21}^{n-1},wu_{21}^n=0.\] Let $s$ be the smallest integer with $1 \leq s \leq n$ such that $wu_{21}^{s-1} \neq 0$ and $wu_{21}^s=0$. Now we claim that either $s=n$ or $s$ satisfies the relation $\lambda'_1=q^{2s-2}\lambda'_2$. Indeed after simplifying the equality $wu^{s}_{21}u_{12}=0$ we obtain
\[0=wu^{s}_{21}u_{12}=q^{-2}(q^{2s}-1)(\lambda'_1-q^{2s-2}\lambda'_2)\lambda'_2wu^{s-1}_{21}.\]
This implies either $q^{2s}=1$ or $\lambda'_1=q^{2s-2}\lambda'_2$. Thus the claim follows.
\par Now for such choice of $1\leq s\leq n$, let $N_3$ be the vector subspace of $N$ spanned by the non zero vectors $wu_{21}^r$ where $0 \leq r \leq s-1$. Then one can prove that $A_q(M_2)$ stabilizes the vector space $N_3$. In fact after some direct calculation, we get
\[\begin{array}{l}
        (wu_{21}^r)u_{11}=(\lambda'_1+q^{-2}(1-q^{2r})\lambda'_2)wu_{21}^r\\
       (wu_{21}^r)u_{12}=\begin{cases}c_rwu_{21}^{r-1},&1\leq r\leq s-1\\
       0,&r=0
       \end{cases}\\
       (wu_{21}^r)u_{21}=\begin{cases}
       wu_{21}^{r+1},&0\leq r\leq s-2\\
       0,& r=n-1
       \end{cases}\\
       (wu_{21}^r)u_{22}=q^{2r}\lambda'_2wu_{21}^r
       \end{array}\] where \[c_r=q^{-2}(q^{2r}-1)\lambda'_1\lambda'_2+(q^{2r}-q^{4r})q^{-4}(\lambda'_2)^{2},\ \ 1\leq r\leq s-1.\]
Therefore owing to simpleness of $N$, $N=N_3$. Note that the vectors $\{wu_{21}^r~|~0\leq r\leq s-1\}$ are eigen vectors of the operator $u_{22}$ corresponding to distinct eigen values. Therefore these vectors are linearly independent. Thus in this case $N_3$ is a simple $A_q(M_2)$-module of dimension $s$ if $\lambda'_1=q^{2s-2}\lambda'_2$ for some $1\leq s\leq n-1$ and otherwise simple module of dimension $n$.

\begin{remak}
It is clear from the action of $A_q(M_2)$ that there does not exist any isomorphism between the above three types of simple $A_q(M_2)$-modules. Indeed 
\begin{itemize}
    \item[(1)] no non zero element of $N_1$ is annihilated by $u_{21}$, but there are some nonzero vector in $N_2$ or in $N_3$  annihilated by $u_{21}$, and
    \item[(2)] no non zero element of $N_2$ annihilated by $u_{12}$, but $w$ in $N_3$ is annihilated by $u_{12}$.
\end{itemize} 
\end{remak}
Finally the above discussions lead us to the main result of this section:
\begin{theom}
Let $q$ be a primitive $m$-th root of unity. Then each simple $u_{22}$-torsionfree $A_q(M_2)$-module is isomorphic to one of the simple $A_q(M_2)$-modules $N_i$ for some $i=1,2,3$ as mentioned above.
\end{theom}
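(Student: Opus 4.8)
The plan is to assemble the three cases analyzed above into a single exhaustive list, the crucial point being that the split according to the scalars $\alpha$ and $\beta$ is both well-defined and complete. First I would record the two standing structural facts: $N$ is finite dimensional over $\mathbb{K}$, being a simple module over the prime affine PI algebra $A_q(M_2)$ (Proposition \ref{sim}), and $u_{22}$ acts invertibly on $N$, since $u_{22}$ is normal and $N$ is $u_{22}$-torsionfree (Lemma \ref{itn}). Because $u_{12}^n$ and $u_{21}^n$ are central (Corollary \ref{li1}), Schur's lemma forces each of them to act on the simple module $N$ as multiplication by a single fixed scalar, namely $\alpha$ and $\beta$ respectively. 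The essential observation is that these scalars are genuine invariants of $N$ and do not depend on the particular choice of common eigenvector $v$ for the commuting family in (\ref{cop}).

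This yields a mutually exclusive and exhaustive trichotomy: either (I) $\beta\neq 0$, or (II) $\beta=0$ and $\alpha\neq0$, or (III) $\alpha=\beta=0$. In each branch I would invoke the corresponding construction already carried out. In Case I the subspace $N_1=\spa\{vu_{21}^r:0\leq r\leq n-1\}$ is nonzero and $A_q(M_2)$-invariant, so simplicity forces $N=N_1$, identifying $N$ with the corresponding member of the family $N_1$; Case II is symmetric and gives $N=N_2$; in Case III one passes to the nonzero subspace $\ke(u_{12})$, extracts a common eigenvector $w$ there, and builds the invariant subspace $N_3=\spa\{wu_{21}^r:0\leq r\leq s-1\}$, whence $N=N_3$. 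Thus every $u_{22}$-torsionfree simple coincides with one of $N_1,N_2,N_3$.

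The main thing to get right is not any single computation — the invariance of each $N_i$, the action formulas, and the dimension counts are already in hand — but the completeness of the case split. The decisive input is Schur's lemma applied to the central elements $u_{12}^n,u_{21}^n$, which guarantees that $\alpha,\beta$ are bona fide module invariants, so that the trichotomy captures every simple and none escapes it. A secondary point requiring care is internal to Case III, where the stopping index $s$, and hence the dimension of $N_3$, bifurcates according to whether the relation $\lambda'_1=q^{2s-2}\lambda'_2$ holds; one must confirm that both alternatives are subsumed under the single family $N_3$. Finally, that the three families are genuinely disjoint, rather than merely a covering with overlaps, follows from the preceding remark distinguishing them by which of the generators $u_{12},u_{21}$ annihilate nonzero vectors, so the resulting list is irredundant.
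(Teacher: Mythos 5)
Your proposal is correct and follows essentially the same route as the paper: the theorem there is precisely the summary of the preceding case analysis, with the exhaustive trichotomy on the Schur scalars $\alpha,\beta$ of the central elements $u_{12}^n,u_{21}^n$ (Case I: $\beta\neq 0$; Case II: $\alpha\neq 0,\beta=0$; Case III: $\alpha=\beta=0$) reducing any simple $u_{22}$-torsionfree module to one of $N_1,N_2,N_3$. Your added emphasis that $\alpha,\beta$ are module invariants independent of the chosen eigenvector, and that the three families are mutually non-isomorphic, matches the paper's use of Schur's lemma and its concluding remark.
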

\subsection{PI degree of $A_q(M_2)$} So far we have classified all simple $A_q(M_2)$-modules in the subsections (\ref{sub2}) and (\ref{sub3}). Observe that the maximal $\mathbb{K}$-dimensions of simple $A_q(M_2)$-modules is $n$. Now the algebra $A_q(M_2)$ being a prime affine PI algebra over an algebraically closed field $\mathbb{K}$, it follows from Proposition \ref{sim} along with \cite[Lemma III.1.2]{brg} that the exact value of $\pideg A_q(M_2)$ is $n$. Thus here we have explicitly determine the PI degree of the algebra $A_q(M_2)$.
\section{\bf{Finite Dimensional Indecomposable $A_q(M_2)$-Modules}}
In this section we aim to construct some finite dimensional indecomposable modules over $A_q(M_2)$. Let $q$ be a primitive $m$-th root of unity. Let $\mathcal{B}$ be the subalgebra of $A_q(M_2)$ generated by the $u_{11},u_{12}$ and $u_{22}$. Observe that $\mathcal{B}u_{12}=u_{12}\mathcal{B}$ is an ideal of $\mathcal{B}$ and $\mathcal{B}/\mathcal{B}u_{12}\cong\mathbb{K}[u_{11},u_{22}]$ is a commutative algebra. Then for $\underline{\lambda}:=(\lambda_1,\lambda_2)\in(\mathbb{K}^*)^2$, there is a one dimensional $\mathcal{B}$-module $\mathbb{K}(\underline{\lambda})$ given by
\[vu_{12}=0,\ vu_{11}=\lambda_1v,\ vu_{22}=\lambda_2v\]
for $v\in \mathbb{K}=\mathbb{K}(\underline{\lambda})$. Define the Verma module $M(\underline{\lambda})$ by
\[M(\underline{\lambda}):=\mathbb{K}(\underline{\lambda})\otimes_{\mathcal{B}}A_q(M_2).\]
Since $A_q(M_2)$ is a free left $\mathcal{B}$-module with basis $\{u_{21}^r|r\geq 0\}$ by Proposition \ref{imp1}, $M(\underline{\lambda})$ has a vector space basis $f(r):=v \otimes u_{21}^r,\ r\geq 0$.
Thus $M(\underline{\lambda})$ carries an infinite dimensional right $A_q(M_2)$-module. The explicit action of the generators of $A_q(M_2)$ on $M(\underline{\lambda})$ is given by
\[
\begin{array}{l}
f(r)u_{11}=(\lambda_1+q^{-2}(1-q^{2r})\lambda_2)f(r)\\
f(r)u_{12}=\begin{cases}
0,&r=0\\
c_rf(r-1),&r\geq 1
\end{cases}\\
f(r)u_{21}=f(r+1)\\
f(r)u_{22}=q^{2r}\lambda_2f(r),
\end{array}\]
where $c_r=q^{-2}(q^{2r}-1)\lambda_1\lambda_2+(q^{2r}-q^{4r})q^{-4}\lambda_2^{2},\ r\geq 1.$
\par Let us set \[n=\begin{cases}
m,& \text{if}\ m \ \text{is odd}\\
\frac{m}{2},& \text{if}\ m\ \text{is even.}
\end{cases}\]  Then owing to the equation $c_{pn}=0$, one gets the $A_q(M_2)$-invariant subspaces of $M(\underline{\lambda})$
\[S_{p,n}:=\spa\{f(pn+i)~|~i\geq 0\},\ \ \ \ p=1,2,\cdots.\]
Now observe that \[M(\underline{\lambda})\supset S_{1,n}\supset S_{2,n} \supset S_{3,n}\supset\cdots.\]
Let $Q_{p,n}:=M(\underline{\lambda})/S_{p,n}$ be the quotient space corresponding to $S_{p,n}$. Then we can write
\[Q_{p,n}=\spa\{\overline{f}(r)=f(r) \mo S_{p,n}~|~0\leq r\leq pn-1\}.\]
The above action induces $pn$-dimensional $A_q(M_2)$-module structure on each $Q_{p,n}$. Next we shall focus on the $A_q(M_2)$-submodules of $Q_{p,n}$. Similar argument as in Lemma \ref{submod}, one can show that the non-zero proper $A_q(M_2)$-submodules of $M(\underline{\lambda})$ containing $S_{p,n}$ are of the form $S_{l,n}, 1\leq l\leq p-1$. Now with this we have the following:
\begin{theo}
Suppose $q$ is a primitive $m$-th root of unity. Then
\begin{enumerate}
    \item [(1)] $Q_{1,n}$ is a simple $A_q(M_2)$-module.
    \item [(2)] When $p>1$, then $Q_{p,n}$ is neither simple nor semisimple $A_q(M_2)$-module.
    \item [(3)] When $p>1$, then $Q_{p,n}$ is an indecomposable $A_q(M_2)$-module.
\end{enumerate}
\end{theo}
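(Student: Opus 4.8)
The plan is to mirror exactly the three-part structure of the analogous Theorem~\ref{itre} for $\mathcal{M}\text{at}_2(q)$, since the module $Q_{p,n}$ here is built by the identical Verma-module construction and we are explicitly permitted to invoke the submodule classification ``Similar argument as in Lemma~\ref{submod}'': the non-zero proper $A_q(M_2)$-submodules of $M(\underline{\lambda})$ containing $S_{p,n}$ are precisely $S_{l,n}$ for $1\leq l\leq p-1$. First I would record the consequence of this classification for the quotient: the non-zero proper $A_q(M_2)$-invariant subspaces of $Q_{p,n}=M(\underline{\lambda})/S_{p,n}$ are exactly the chain $S_{l,n}/S_{p,n}$ for $1\leq l\leq p-1$, which are totally ordered by inclusion. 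Every subsequent part follows from this lattice description.

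For part~(1), when $p=1$ the chain of proper submodules is empty, so $Q_{1,n}$ has no non-zero proper invariant subspace and is therefore simple. For part~(2), when $p>1$ the chain is non-empty, so $Q_{p,n}$ is not simple; to rule out semisimplicity I would exhibit one invariant subspace with no invariant complement, imitating the proof of Theorem~\ref{itre}(2). Concretely I would take $W:=\spa\{\overline{f}(r)\mid (p-1)n\leq r\leq pn-1\}$, check it is $A_q(M_2)$-invariant using the displayed action (note $f(r)u_{12}=c_r f(r-1)$ with $c_{pn}=0$ keeps the top block closed, and $u_{21}$ raises the index with $\overline{f}(pn-1)u_{21}=0$ in the quotient), and then suppose a complementary invariant $\overline{W}$ exists. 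Any nonzero $x\in\overline{W}$ with a nonzero coefficient in some index $j'<(p-1)n$ can be pushed by a suitable power $x\,u_{21}^{(p-1)n-j'}$ into $W$ while remaining in $\overline{W}$, contradicting $W\cap\overline{W}=0$; since the $u_{22}$-eigenvalues $q^{2r}\lambda_2$ separate the basis vectors, the raising does not annihilate the relevant term prematurely.

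For part~(3), indecomposability is immediate from the lattice: the non-zero proper invariant subspaces $S_{l,n}/S_{p,n}$, $1\leq l\leq p-1$, form a single \emph{chain} under inclusion, so no two distinct non-zero invariant subspaces intersect trivially, and hence $Q_{p,n}$ cannot be written as a direct sum of two non-zero submodules. Alternatively, the same complement-free argument from part~(2) applies verbatim to each $S_{l,n}/S_{p,n}$.

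The main obstacle I anticipate is purely computational and localized to part~(2): verifying that the candidate $W$ is genuinely $A_q(M_2)$-invariant and that the raising operation $u_{21}^{(p-1)n-j'}$ lands a vector inside $W$ without the $c_r$-coefficients vanishing at the wrong step. This hinges on the precise behaviour of the $u_{12}$-action, where the coefficient $c_r=q^{-2}(q^{2r}-1)\lambda_1\lambda_2+(q^{2r}-q^{4r})q^{-4}\lambda_2^{2}$ vanishes exactly at multiples of $n$; I would confirm that within a single block $(p-1)n\le r\le pn-1$ the only vanishing occurs at the block boundary $r=pn$, so that $W$ is closed under $u_{12}$ and $u_{21}$ while the separation of $u_{22}$-eigenvalues guarantees the contradiction goes through. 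Everything else reduces to the already-granted submodule classification and the formal definition of (in)decomposability.
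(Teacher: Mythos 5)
Your proposal is correct and is essentially the paper's own proof: the paper literally says the argument is ``parallel to Theorem~\ref{itre}'', i.e.\ the granted chain of submodules $S_{l,n}/S_{p,n}$ settles (1) and (3), and the complement-free argument for the top block $W=\spa\{\overline{f}(r)\mid (p-1)n\leq r\leq pn-1\}$, pushed into by a suitable power of the raising operator $u_{21}$, settles (2) --- exactly your plan, with your chain observation for (3) being only a mild streamlining of the same step. Two small corrections to your sketch: the closure of $W$ under $u_{12}$ rests on $c_{(p-1)n}=0$ (the bottom boundary of the block, which is a multiple of $n$), not on $c_{pn}=0$; and your anticipated obstacle about $c_r$ vanishing ``at the wrong step'' during the raising is vacuous, since raising is done by $u_{21}$, which acts with coefficient $1$, the coefficients $c_r$ appearing only in the lowering action of $u_{12}$.
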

The proof of this theorem is parallel to the Theorem \ref{itre}.

\end{document}